\newcommand{\PP}{ \mathbb{P}}
\newcommand{\M}{{\mathbb M}}
\newcommand{\EE}{{\mathbb E}}
\newcommand{\ds}{\displaystyle}
\newcommand{\supp}{\mathrm{supp}\;}
\newcommand{\diver}{{\rm{div}}}
\DeclareMathOperator{\divergence}{div}
\newtheorem{remark}{\textbf{Remark}}[section]
\newtheorem{lemma}{\textbf{Lemma}}[section]
\newtheorem{theorem}{\textbf{Theorem}}[section]
\newtheorem{proposition}{\textbf{Proposition}}[section]
\numberwithin{equation}{section}
\title[High-order scheme for MFGs system]{A high-order scheme for mean field games} 
\author{Elisa Calzola \and Elisabetta Carlini \and Francisco J. Silva}
\thanks{``Sapienza'', Universit\`a di Roma, Dipartimento di Matematica Guido Castelnuovo, 00185 Rome, Italy (calzola@mat.uniroma1.it, carlini@mat.uniroma1.it)}
\thanks{Institut de recherche XLIM-DMI, UMR 7252 CNRS, Facult\'e des Sciences et Techniques,
	Universit\'e de Limoges, 87060 Limoges, France (francisco.silva@unilim.fr)}
\def\dd{{\rm d}}
\newcommand{\ov}[1]{\overline{#1}}
\def\weight(#1,#2){c_{#1,#2}}
\def\Dt{\Delta t}
\def\Dx{\Delta x}
\def\D{\mathcal{D}}
\def\F{\mathcal{F}}
\def\I{\mathcal{I}}
\def\M{\mathcal{M}}
\def\P{\mathcal{P}}
\def\eps{\varepsilon}
\def\supp{\mathop{\rm supp}}
\def\half{\mbox{$\frac{1}{2}$}}
\def\1B{{\bf  1}}
\newcommand{\NN}{\mathbb{N}}
\newcommand{\ZZ}{\mathbb{Z}}
\newcommand{\OO}{\mathcal{O}}
\newcommand{\RR}{\mathbb{R}}
\def\EE{\mathbb{E}}
\def\PP{\mathbb{P}}
\newcommand\be{\begin{equation}}
\newcommand\ee{\end{equation}}
\newcommand\ba{\begin{array}}
\newcommand\ea{\end{array}}
\newcommand{\bean}{\begin{eqnarray*}}
\newcommand{\eean}{\end{eqnarray*}}
\def\ds{\displaystyle}
\begin{document}

\begin{abstract}
In this paper we propose a high-order numerical scheme for time-dependent mean field games systems. The scheme, which is built by combining Lagrange-Galerkin and semi-Lagrangian techniques, is consistent and stable for large time steps compared with the space steps. We provide a convergence analysis for the exactly integrated Lagrange-Galerkin scheme applied to the Fokker-Planck equation, and we propose an implementable version with inexact integration. Finally, we validate the convergence rate
of the proposed scheme through the numerical approximation of two mean field games systems.
\end{abstract}

\maketitle

{\small
\noindent {\bf AMS subject classification.} 35Q84, 65M12, 91A16. \\[0.5ex]
\noindent {\bf Keywords.} Mean field games, Lagrange-Galerkin schemes, semi-Lagrangian schemes, high-order accuracy, Fokker-Planck equations.
}

\section{Introduction} 
\label{Sect_Intro} 

This work concerns the numerical approximation of Mean Field Games (MFGs), introduced simultaneously by Lasry-Lions in \cite{LasryLions06i,LasryLions06ii,LasryLions07} and by Huang-Caines-Malham\'e in
 \cite{HMC06}. MFGs characterize Nash equilibria of stochastic differential games with an infinite number of indistinguishable players. In some specific instances, the aforementioned equilibria are described by a system of parabolic Partial Differential Equations (PDEs) consisting of a Hamilton-Jacobi-Bellman (HJB) equation, with a terminal condition, coupled with a Fokker-Planck (FP) equation with an initial condition.


The numerical approximation of MFGs has been an active area of research over the last decade (see e.g.~\cite{MR4214777,MR4368188} and the references therein). Let us mention, for instance, the articles~\cite{AchdouCapuzzo10} and~\cite{CS15} proposing a semi-implicit finite difference scheme and a Semi-Lagrangian (SL) type scheme, respectively. The scheme studied in \cite{CS15}, which allows for large time steps compared to space steps, has been extended in \cite{MR3828859} to deal with nonlinear FP equations and in \cite{Jakobsen_et_al_2021} to approximate MFGs with non-local diffusions. On the other hand, to the best of our knowledge, only few works deal with high-order numerical schemes for MFG systems. Let us mention \cite{PT15} and \cite{MR4253925}, where the authors propose finite difference based second-order accurate methods, and the recent contribution~\cite{MR4622007}, where high-order space-time finite elements are used to approximate variational MFGs.  

The main purpose of this article is to provide a new high-order approximation scheme, meaning an order of convergence larger than two, for a class of second-order MFG systems with constant diffusion. The scheme combines a high-order Lagrange-Galerkin (LG) discretization for the FP equation with a high-order SL discretization for the HJB equation.  The main novelty of our scheme lies in the discretization of the FP equation which, inspired by \cite{Morton88} and \cite{MR3828859}, is constructed by using SL techniques for the time discretization (see e.g. \cite{CamFal95,falconeferretilibro}) and LG techniques for the space discretization (see e.g. \cite{Morton88,Bermejo12}). More precisely, the stochastic characteristic curves of the FP equation are approximated with a Crank-Nicolson method (see e.g. \cite{MR1214374,milstein:2013}), as in high-order SL schemes for parabolic equations (see \cite{BCCF21}), and the space variable is discretized by using a LG scheme with a symmetric Lagrangian basis of odd order. This last choice is inspired by the results in \cite{Ferretti2013, Ferretti20}, where the equivalence between SL and LG schemes has been studied, and where symmetric odd basis have shown a good behavior in terms of stability. The resulting scheme for the FP equation is explicit, conservative, consistent, stable, allows for large time steps compared with space steps, is convergent, and high-order accurate. When coupled with a high-order SL for the HJB equation, one obtains a high-order scheme for the MFG system which, because of its forward-backward structure, is not explicit and is solved by fixed-point iterations. We numerically show high-order accuracy of the scheme by considering two MFG systems. The first one is a linear-quadratic MFG with non-local couplings (see e.g. \cite{MR3489817}), for which we are able to compute its analytical solution, and the second one, taken from \cite{PT15}, is a MFG with local couplings (see e.g. \cite{MR4214774}) and no explicit solution.

The article is organized as follows. In Section \ref{sec_mfg}, we recall the MFG system we are interested in, as well as some basic results on FP equations. Section \ref{lagrange_galerkin_first_order_case} introduces a new scheme for FP equations, based on SL techniques and LG approximations, and establish its main properties.
In Section~\ref{sec_schemeMFG}, we present a high-order SL scheme for HJB equations and couple it with the scheme for the FP equation studied in Section~\ref{lagrange_galerkin_first_order_case} to derive a new scheme for the MFG system. Finally, in Section~\ref{sec:numerics} we provide an implementable version of the method, derived from the use of a cubic basis and Simpson's rule in the LG approximation. The paper concludes by showing the performance of the proposed scheme in two examples: a linear-quadratic MFG with non-local couplings and admitting an explicit solution, and a MFG with local couplings and without explicit solutions. In all the numerical examples, an order of accuracy between two and three is observed. Finally, we provide in the Appendix of this work the proof of some needed technical results.

\section{Preliminary results}\label{sec_mfg}
In the following, given a function $u: [0, T]\times \RR^d\to \RR$ and $(t,x)\in (0,T)\times \RR^d$,  the notations $\nabla u(t,x)$ and $\Delta u(t,x)$ refer to the gradient and Laplacian of $u$ with respect to the spatial variable $x$. Similarly, given $v: [0, T]\times \RR^d\to \RR^d$, the notation $Dv$ and $\text{div}(v)$ refer  to the Jacobian matrix and the divergence of $v$ with respect to the space variable, respectively.
We also denote by $(\mathcal{P}_1(\RR^d),{\bf{d}})$ be the metric space of Borel probability measures on $\RR^d$ with finite first order moment, endowed with the $1$-Wasserstein distance ${\bf{d}}$  (see e.g. \cite[Section 7.1]{Ambrosiogiglisav} for the definition of ${\bf{d}}$).

We focus on the numerical approximation of the following time-dependent second-order MFG  with non-local couplings (see~\cite{LasryLions06ii,LasryLions07}): 
\be\ba{rcl}\label{MFG} \tag{{\bf MFG}}
-\partial_{t} v -\frac{\sigma^2}{2} \Delta v+H(x,\nabla v)   &=& F(x, m(t)) \;  \;    \hbox{in }   [0,T)\times\RR^d, \\[6pt]
\partial_{t} m  -\frac{\sigma^2}{2} \Delta m-\diver\big( \partial_{p}H(x,\nabla  v) m\big) &=&0 \; \; \; \hbox{in }(0,T]\times  \RR^d, \\[6pt]
v(T,\cdot)= G(\cdot, m(T)),   & \; & \; m(0,\cdot)= m^*_0 \quad \mbox{in }  \RR^{d},
\ea\ee
\normalsize
where $T>0$, $\sigma\in \RR \setminus \{0\}$, $\RR^d\times \RR^d \ni (x,p) \mapsto  H(x,p) \in \RR$ is convex and differentiable with respect to $p$, $F$, $G: \RR^d\times \P_1(\RR^d)\to \RR$, and $m^*_0: \RR^d\to \RR$. Notice that \eqref{MFG} consists of a HJB equation, with a terminal condition, coupled with a FP equation with an initial condition. 
For the sake of simplicity,  in what follows we will suppose that the {\it Hamiltonian} $H$ is purely quadratic, i.e. $H(x,p)= |p|^2/2$ for all $x,\, p \in \RR^d$ and we assume that: \smallskip\\
{\bf(H1)}  $m_0^*$ is nonnegative, H\"older continuous, has compact support, and $\int_{\RR^d}m_0^*(x) \dd x=1$. \smallskip
. \smallskip\\
{\bf(H2)}  $F$ and $G$ are bounded and Lipschitz continuous. Moreover, for every $\mu \in \P_1(\RR^d)$, $F(\cdot, \mu)$ is of class $C^2$  and 
$$
\sup_{x\in \RR^d, \mu \in \P_1(\RR^d)} \left\{ \|DF(x,\mu)\|_{\infty} +\|D^2F(x,\mu)\|_{\infty}\right\} <\infty.
$$

Under {\bf(H1)-\bf(H2)}   system \eqref{MFG} admits at least one classical solution $(v^*,m^*)$ (see e.g. \cite[Theorem 3.1]{Cardialaguet10}). Moreover, if the coupling terms $F$ and $G$ satisfy a monotonicity condition with respect to $m$, then the classical solution is unique (see \cite[Theorem 2.4]{LasryLions07}). 

In  order to obtain a high-order scheme for \eqref{MFG}, our first task will be to construct a high-order LG scheme for the following linear FP equation:
\be
\label{FP}
\ba{rcl}
 \partial_t m -\frac{\sigma^2}{2} \Delta m +\mbox{div}\left(b m  \right) &=& 0 \quad \mbox{in } (0,T)\times \RR^d, \\[6pt]
 m(0,\cdot)&= &  m^{\ast}_{0} \quad \mbox{in } \RR^d,
\ea \tag{{\bf FP}}
\ee
where $\sigma \in \RR\setminus\{0\}$,  $ b: [0, T]\times \RR^d \to \RR^d$, and  $m_0^*:\RR^d \to \RR$.  
We will assume that:\smallskip\\
{\bf(H3)}  \smallskip
  $b\in C([0,T]\times \RR^d)$,  $b$ is bounded
and there exists $C_{b}>0$ such that 
  $$| b(t,x)-b(t,y)| \leq C_{b}|x-y|, \quad \text{for $t\in [0,T]$  and $x,\, y\in \RR^d$}.
 $$  
\smallskip
In the following result, proved in the Appendix, we summarize some properties of equation \eqref{FP}. 
\begin{theorem}
\label{teorema_well_posedness} 
Assume {\bf(H1)} and {\bf(H3)}.  Then the following hold: 
\begin{enumerate}
\item[{\rm(i)}]  Equation~\eqref{FP} admits a unique classical solution $m^*\in C^{1,2}([0,T]\times \RR^d)$.  
\item[{\rm(ii)}] $m^* \geq 0$. 
\item[{\rm(iii)}]   $\int_{\RR^d} m^*(t,x) \dd x=1$ for all $t\in[0,T]$.
\item[{\rm(iv)}]  $m^*$ is the unique solution in $L^2([0,T]\times \RR^d)$ to \eqref{FP}  in the distributional sense. 
\end{enumerate}
\end{theorem}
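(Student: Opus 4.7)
My plan combines a probabilistic construction of the solution with parabolic Schauder theory, and closes with a duality argument for the $L^2$-uniqueness statement (iv).

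For existence of a non-negative classical solution (items (i)--(ii)), I would start from the probabilistic representation. Under {\bf(H3)}, the stochastic differential equation $\dd X_t = b(t,X_t)\,\dd t + \sigma\,\dd W_t$ with $X_0 \sim m_0^*$ admits a unique strong solution, whose time marginals possess smooth transition densities $p(0,y;t,x)$ because the diffusion is non-degenerate and $b$ is Lipschitz. Setting $m^*(t,x) := \int_{\RR^d} p(0,y;t,x)\, m_0^*(y)\,\dd y$ produces a non-negative function that, by It\^o's formula applied to test functions $\phi \in C_c^\infty(\RR^d)$, solves \eqref{FP} in the distributional sense; this already gives (ii). To upgrade to $C^{1,2}$ regularity, I would rewrite \eqref{FP} in non-divergence form as $\partial_t m - (\sigma^2/2)\Delta m + b \cdot \nabla m + (\diver b)\, m = 0$, approximate $b$ by smooth $b^{\varepsilon}$ via mollification (preserving the Lipschitz bound), apply standard parabolic Schauder estimates to the corresponding approximating problems using the H\"older continuity of $m_0^*$ from {\bf(H1)}, and pass to the limit to obtain $m^* \in C^{1,2}$. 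Uniqueness among classical solutions then follows from the maximum principle applied to the difference of two solutions, which satisfies the same linear equation with zero initial datum.

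For mass conservation (iii), I would integrate \eqref{FP} against a smooth cutoff $\chi_R$ supported in a ball $B_R(0)$ and let $R \to \infty$. Gaussian upper bounds for $p$, available under {\bf(H3)}, ensure that $m^*(t,\cdot)$ and $\nabla m^*(t,\cdot)$ have enough decay for the boundary contributions from both the Laplacian and the divergence term to vanish, leaving $\tfrac{d}{dt}\int_{\RR^d} m^*(t,x)\,\dd x = 0$. Combined with $\int m_0^*\,\dd x = 1$ from {\bf(H1)}, this yields (iii).

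The step I expect to be the main obstacle is (iv). Given two $L^2$ distributional solutions with difference $w$ (satisfying zero initial datum), I would fix $\varphi \in C_c^\infty((0,T)\times \RR^d)$ and solve the backward adjoint problem
\[
-\partial_t u - \tfrac{\sigma^2}{2}\Delta u - b \cdot \nabla u = \varphi, \qquad u(T,\cdot) = 0,
\]
by the same parabolic theory used in the first step. Pairing $w$ against $u$ and integrating by parts in the distributional sense yields $\int_0^T\!\int_{\RR^d} w\,\varphi\,\dd x\,\dd t = 0$ for every such $\varphi$, forcing $w \equiv 0$ a.e. The technical point is producing $u$ with enough decay and integrability at infinity so that the duality pairing with a function $w$ only known to be in $L^2$ is well-defined and the formal integration by parts is legitimate; this is where the boundedness and Lipschitz hypotheses in {\bf(H3)} enter crucially, via Gaussian-type estimates on $u$ and its derivatives inherited from the compactly supported forcing $\varphi$.
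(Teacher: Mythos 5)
The paper does not actually prove this theorem: its ``proof'' consists entirely of citations (existence and nonnegativity to a monograph on Fokker--Planck--Kolmogorov equations, uniqueness and mass conservation to two further results therein, and item (iv) to Figalli's $L^2$-uniqueness theorem). Your outline essentially reconstructs the standard machinery behind those citations rather than diverging from it: the probabilistic representation for (i)--(ii), a cutoff argument for (iii) (which, incidentally, you get for free from the representation, since $\int_{\RR^d} p(0,y;t,x)\,\dd x = 1$ already gives $\int_{\RR^d} m^*(t,x)\,\dd x = \int_{\RR^d} m_0^*(y)\,\dd y=1$ without any boundary-term analysis), and the backward-adjoint duality for (iv), which is precisely the argument of the cited uniqueness result. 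Your observation that the adjoint problem is tractable because the backward operator is in non-divergence form and never sees $\diver b$ is the right one, and your identification of the decay/integrability of $u$ as the technical crux of (iv) is accurate.

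The step that does not close as written is the upgrade to $C^{1,2}$ regularity in (i). Under {\bf(H3)} the drift is only Lipschitz in $x$ and merely continuous in $t$, so $\diver b$ exists only a.e.\ and lies in $L^\infty$. After mollification, $\diver(b^{\varepsilon})$ is uniformly \emph{bounded} but not uniformly H\"older: its $C^{\alpha}$ norm blows up like $\varepsilon^{-\alpha}$. Consequently the parabolic Schauder estimates for the approximating problems are not uniform in $\varepsilon$, and you cannot pass to the limit in $C^{1,2}$; at best you recover $W^{2,1}_p$ or De Giorgi--Nash--Moser H\"older regularity of $m^*$. The claim of ``smooth transition densities because $b$ is Lipschitz'' has the same problem: the parametrix construction gives a fundamental solution of the \emph{backward} operator that is $C^{1,2}$ in the forward variables, but for $p(0,y;t,\cdot)$ to solve the \emph{forward} (divergence-form) equation classically you again need H\"older control of $\diver b$ (and H\"older-in-$t$ regularity of the coefficients, which {\bf(H3)} does not provide). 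To make (i) rigorous you must either strengthen the hypotheses to $b(t,\cdot)\in C^{1,\alpha}$ uniformly in $t$ with suitable time regularity --- which is in effect what the result the paper cites requires --- or settle for a weaker notion of solution in (i). A minor additional point: uniqueness of classical solutions via the maximum principle on $[0,T]\times\RR^d$ requires a growth restriction at infinity (boundedness or a Phragm\'en--Lindel\"of condition) on the class of solutions considered, which you should state; alternatively, uniqueness in (i) can simply be subsumed by the $L^2$-uniqueness of (iv) once the classical solution is shown to be square-integrable.
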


Let us recall the probabilistic interpretation of the solution $m^*$ to \eqref{FP}, which will be useful in order to construct a LG scheme. Let $W$ be a $d$-dimensional Brownian motion defined on a probability space $(\Omega, \F, \PP)$ and let $Y_0:\Omega \to \RR^d$ be a random variable, independent of $W$, and whose distribution is absolutely continuous with respect to the Lebesgue measure in $\RR^d$, with density given by $m_0^*$. Given $(t,x) \in [0,T]\times \RR^d$, we define $Y^{t,x}$ as the unique strong solution to the SDE:
\be
\label{SDE_underlying_FP}
\ba{rcl} \dd Y(s) &=& b(s,Y(s)) \dd s  + \sigma \dd W(s) \quad \text{for $s\in (t,T)$},\\[6pt]
		      Y(t) &=& x. 
\ea
\ee
Denote by $\EE(X)$ the expectation of a random variable $X:\Omega\to \RR$. Under the asspumptions of Theorem \ref{teorema_well_posedness},  $Y^{0,Y_0}(t)$ is well defined for all $t\in [0,T]$ and its distribution is absolutely continuous with respect to the Lebesgue measure in $\RR^d$, with density given by $m^*(t,\cdot)$ (see e.g. \cite{figalli08}).  From the $\PP$-a.s. equality $Y^{0,Y_0}(s)= Y^{t,Y^{0,Y_0}(t)}(s)$ for every $0\leq t\leq s \leq T$, we deduce that for every  continuous  and bounded function $\phi: \RR^d\to \RR$, we have 
\be
\label{basic_formula_for_the_scheme}
\int_{\RR^d} \phi(x) m^*(s,x)\dd x=\int_{\RR^d}  \EE \left( \phi( Y^{t,x}(s)) \right)m^*(t,x)\dd x.
\ee


\section{A Lagrange-Galerkin type scheme for a  Fokker-Planck equation} 
\label{lagrange_galerkin_first_order_case}

Let us focus on the numerical approximation of $\eqref{FP}$. Notice that if $b$ is differentiable with respect to the space variable,   \eqref{FP} can be written as  
$$
\begin{aligned}
	\partial_t m -\frac{\sigma^2}{2} \Delta m + \langle b,\nabla m\rangle +\mbox{{\rm div}}(b)m &= 0 &\mbox{in } (0,T)\times \RR^d, \\
	m(0,\cdot)&= m_{0}^* & \mbox{in } \RR^d.
\end{aligned} 
$$
Using this formulation, a second-order accurate semi-Lagrangian scheme can be derived to approximate $m^*$ (see e.g. \cite{BCCF21}). However, such a scheme is not conservative, i.e. the discrete solution does not satisfy the discrete analogous of Theorem~\ref{teorema_well_posedness}{\rm(iii)}. The scheme that we consider, which will be built from \eqref{basic_formula_for_the_scheme}, will allow us to preserve this property (see Theorem~\ref{Prop:stability}{\rm(ii)} below).

Let us fix $N_{\Delta t} \in \NN$, set $\I_{\Dt}=\{0,\hdots, N_{\Delta t}\}$, $\I_{\Dt}^*=\I_{\Dt}\setminus \{N_{\Delta t}\}$, $\Delta t = T/N_{\Delta t}$, and $t_{k}= k\Delta t$ ($k\in \I_{\Dt}$). Let $x\in\RR^{d}$ and consider the sequence of random variables $(y_{k})_{k=0}^{N_{\Delta t}}$ defined by $y_0=x$ and, for every $k\in\I_{\Delta t}^{*}$, $y_{k+1}$ is the unique solution to
\be\label{eq:CN}
y=y_{k}+\frac{\Dt}{2}\left(b(t_k,y_{k})+b(t_{k+1},y)\right) +\sqrt{\Dt}\sigma\xi_k,
\ee
where $(\xi_{k})_{k=0}^{N_{\Delta t}-1}$ is a sequence of i.i.d. $\RR^d$-valued random variables with i.i.d. components such that, for every $k\in\I_{\Delta t}^{*}$,
\be
\label{rv}
\PP((\xi_k)_i=0)=2/3 \quad \text{and}\quad \PP((\xi_k)_i=\pm \sqrt{3})=1/6\quad \mbox{for all }\quad i=1,\hdots,d.
\ee

Since $b$ is Lipschitz continuous, the sequence $(y_{k})_{k=0}^{N_{\Delta t}}$, called the Crank-Nicolson (CN) approximation of $Y^{0,x}$, is well-defined for $\Delta t$ sufficiently small. 

An interesting feature of the law of $(\xi_{k})_i$ in~\eqref{rv} is that, provided that $b$ is smooth enough, $(y_{k})_{k=0}^{N_{\Delta t}}$ is a second order weak approximation of $Y^{0,x}$ (see e.g.~\cite[Section 15.4, equation (4.11)]{MR1214374} and also \cite[Section 2, Table 1]{ferrettisecondoordine}), i.e.
for every $\phi:\RR^d\to \RR$ smooth enough and for every $k\in\I_{\Delta t}^{*}$, we have
 \be \label{stimaCN}
  \big|\EE\left(\phi(y_k)\right) - \EE \left( \phi(Y^{0,x}(t_{k}))\right)\big|
= O\left( (\Dt)^2\right).
 \ee
Notice that this estimate is better than the one obtained by considering a classical random walk in $\RR^{d}$, i.e. when distribution of $(\xi_{k})_i$ is given by
\be
\label{rv0}
\PP((\xi_k)_i=\pm 1)=1/2\quad \mbox{for all }\quad i=1,\hdots,d,
\ee
for which it is known that second order accuracy does not hold (see Sections 5.1.A and 5.1.B in~\cite{KPS}).

In order to discretize~\eqref{basic_formula_for_the_scheme}, for every $k\in\I_{\Delta t}^{*}$ and $x\in \RR^d$, denote by $y^{t_k,x}$ the one-step CN approximation of $Y^{t_{k}, x}(t_{k+1})$, given by the unique solution to \eqref{eq:CN}.
Let $\I_{d}=\{1, \hdots, 3^d\}$, define $\{ e^{\ell} \, | \, \ell\in \I_{d}\} \subset \RR^d$ as the set of possible values of $\xi_{k}$, set $\omega^\ell=\PP(\xi_k=e^\ell)$, and denote by $y^\ell_k(x)$ the unique solution to \eqref{eq:CN} for $\xi_k=e^\ell$ ($\ell \in \I_d$). By setting $t=t_k$, $s=t_{k+1}$, and replacing $Y^{t_{k}, x}(t_{k+1})$ by $y^{t_k,x}$ in~\eqref{basic_formula_for_the_scheme}, we obtain  the following semi-discrete scheme for~\eqref{FP}:

 \be\label{semidiscreteintime}
\int_{\RR^d} \phi(x) m_{k+1}(x)\dd x=\sum_{\ell\in \I_{d}}\omega_\ell \int_{\RR^d}\phi( y^\ell_k(x) )m_k(x)\dd x \quad \text{for $\phi$ continuous and bounded, $k\in \I_{\Dt}$,}
 \ee
with $m_0= m_{0}^*$ and unknowns $\{m_{k}: \RR^d \to \RR \, | \, k\in \I_{\Dt}\setminus \{0\}\}$.
Note that the assumption that $m_0$ has a compact support implies the existence of $L_{\Delta t} = O(1/\sqrt{\Delta t})$ such that the solution $m_{\Delta t}$ to \eqref{semidiscreteintime} satisfies
\be\label{Compact-support}
\supp(m_{\Delta t,k}) \subset [-L_{\Delta t},L_{\Delta t}]^{d} \quad \text{for $k\in \I_{\Dt}$}.
\ee

In order to construct a space discretization of \eqref{semidiscreteintime}, and hence a fully-discrete scheme for~\eqref{FP}, we consider a symmetric Lagrangian basis of odd order. More precisely, let us fix $p\in \NN$, set $q:=2p+1$, and let $\widehat{\beta}: \RR \to \RR$ be defined by
\be\label{eq:referencebasis}
(\forall \, \xi \in [0,\infty)) \quad \widehat{\beta}(\xi)= \begin{cases} \ds \prod_{k\neq 0, \, k=-p}^{p+1} \frac{\xi-k}{-k} & \text{if } \xi \in [0,1],\\[16pt]
\ds \prod_{k\neq 0, \, k=-p+1}^{p+2} \frac{\xi-k}{-k} & \text{if } \xi \in (1,2],\\[6pt]
\vdots & \; \\[6pt]
\ds \prod_{k=1}^{2p+1} \frac{\xi-k}{-k} & \text{if } \xi \in (p,p+1],\\[12pt]
 0 & \text{if } \xi \in (p+1, \infty), \\[12pt]
 \widehat{\beta}(-\xi) & \text{if } \xi \in (-\infty, 0).
 \end{cases} 
\ee

Following \cite{Ferretti2013}, for $\Delta x \in (0,\infty)$, we consider the symmetric Lagrange interpolation basis functions $\{\beta_i\}_{i \in {\ZZ^d}}$ defined as
$$
(\forall \, z=(z_1,\dots z_d)\in \RR^d, \; i=(i_1,\dots,i_d)\in \ZZ^d) \quad  \beta_i(z)=\prod_{j=1}^{d} \widehat \beta\left(\frac{z_j}{\Dx}-i_j\right).
$$
  
For all $i\in \ZZ^d$, let us set $x_i = i \Delta x$.  Notice that  $\beta_i$ has compact support, $\beta_i(x_j)=1$ if $i=j$ and $ \beta_i(x_j)=0$ otherwise, and, for all $x\in \RR^d$, $\sum_{j \in \ZZ^d} \beta_j(x) =1$.
Given  $f \in W^{q+1,\infty}(\RR^d) $,  we define the interpolant $I[f]: \RR^d \to \RR$  by 
\be
\label{definizione_I}
I[f](x)= \sum_{i \in \ZZ^d}f(x_i)\beta_{i}(x) \quad \text{for } x\in \RR^d.
\ee 
By \cite[Theorem 16.1]{Ciarlet},  the following estimate holds
\be\label{approximation_error_V_Deltax_q}
\sup_{x\in \RR^d} |f(x) -I[f](x)|\leq C_I (\Delta x)^{q+1} \| D^{q+1} f \|_{L^\infty},
\ee
where $C_I>0$ is  independent of $f$ and $\Delta x$.  Notice that in the one dimensional case ($d=1$),   $I[f]$ restricted to a given interval $(x_i, x_{i+1})$  ($i\in \ZZ$) is the Lagrange interpolating polynomial of degree $q$ constructed on the symmetric stencil   $x_{i-(q-1)/2},\dots, x_{i+1+(q-1)/2}$.

Let $L_{\Delta t}>0$ be as in \eqref{Compact-support},  let $N_{\Delta x}\in \NN$, and set $\I_{\Dx}\:=\{-N_{\Dx},\dots,N_{\Dx}\}^d$. From now on, we assume that $\Delta x= L_{\Delta t}/N_{\Delta x}$, we set    
$$\Delta=(\Delta t, \Delta x), \quad \text{and} \quad \OO_{\Delta}=[-L_{\Dt} - p\Dx, L_{\Dt}+ p\Dx]^d.$$ 

We look for an approximation $m_{\Delta}$ of the solution $m^*$ to \eqref{FP} such that, for all $ k\in \I_{\Delta t}$,
\be\label{m_Delta}
m_{\Delta}(t_k,x) =\sum_{i \in  \I_{\Delta x}}m_{k,i}\beta_{i}(x)\;\text{for}\; x\in \OO_{\Delta },\quad
m_{\Delta}(t_k,x) =0\;\text{for}\; x\in \RR^d\setminus \OO_{\Delta },\ee
where  $m_{k,i}\in \RR$  ($k\in \I_{\Dt}$,  $i\in \I_{\Dx}$) have to be determined.  Notice that, by definition of $\I_{\Delta x}$, for all $k\in \I_{\Delta t}$ we have that $\mbox{supp}\{m_{\Delta}(t_k,\cdot)\}\subset \OO_{\Delta}$. 
 Replacing $m$ by $m_{\Delta}$ and taking $\phi=\beta_i$ ($i\in \I_{\Dx}$) in \eqref{semidiscreteintime} yields the  following explicit iterative  scheme for the unknowns $m_{k,i}\in \RR$  ($k\in \I_{\Dt}$,  $i\in \I_{\Dx}$) 
\be \label{LG}
\ba{rrl} 
\ds \sum_{j\in  \I_{\Delta x}} m_{k+1,j}\int_{\OO_{\Delta}  }\beta_{i}(x)\beta_j(x) \dd x&=&\ds \sum_{j\in  \I_{\Delta x}}  m_{k,j}\sum_{\ell\in \I_{d}}\omega_\ell \int_{\OO_{\Delta}  } \beta_i ( y^\ell_k(x)) \beta_j(x)\dd x \\[15pt]
\; & \; &  \hspace{3cm} \text{for $k\in \I^*_{\Dt}$, $i\in \I_{\Delta x}$}, \\[4pt]
\ds \sum_{j \in  \I_{\Delta x}} m_{0,j} \int_{\OO_{\Delta}} \beta_{i}(x) \beta_{j}(x) \dd x &=&\ds \int_{\OO_{\Delta}} m_0^*(x)\beta_{i}(x) \dd x. \hspace{0.6cm} 
\ea
\ee
\smallskip

Let  $A$  be the $(2N_{\Dx}+1)^d\times (2N_{\Dx}+1)^d$ real mass matrix with entries given by 
\be
\label{def_mass_matrix}
A_{i,j}=\int_{ \OO_{\Delta}   }\beta_i(x)\beta_j(x) \dd x \quad \mbox{for} \quad (i,j)\in \I_{\Dx}\times\I_{\Dx}.
\ee
For $k\in \I_{\Dt}^*$ and $\ell\in \I_d$,  let $B^\ell_k$ be the  $(2N_{\Dx}+1)^d\times (2N_{\Dx}+1)^d$ real matrix  with entries  given by
\be
\label{matrice_B_K}
(B_{k}^\ell)_{i,j}=\int_{\OO_{\Delta}  } \beta_i( y^\ell_k(x)) \beta_j(x)\dd x \quad \mbox{for} \quad (i,j)\in  \I_{\Dx}\times\I_{\Dx}.
\ee
 Let   $m_{0,\Delta x}$ be the $(2N_{\Dx}+1)^d$ dimensional  real vector with entries   
$$( m_{0,\Delta x} )_i=\int_{\OO_{\Delta}} m^*_0(x)\beta_{i}(x) \dd x \quad \text{for } i\in \I_{\Delta x}.$$ 
Calling $m_{k}=(m_{k,i})_{i\in \I_{\Dx}}$,   scheme \eqref{LG}  can be rewritten in the following matrix form:  find $m_k$ ($k\in \I_{\Dt}$) such that
\be
\label{LG_matrix}
\ba{rrl}\ds 
A m_{k+1}&=& \ds \sum_{\ell\in \I_d} \omega_\ell  B^ \ell_k m_{k} \quad \text{for } k\in \I^*_{\Delta t},\\[12pt]
A m_0&=& m_{0,\Delta x}.
\ea
\ee

\subsection{Properties of the space-time Lagrange-Galerkin scheme}
We show below  some important properties of the scheme \eqref{LG}.
\begin{theorem}
\label{Prop:stability}  
Assume {\bf(H1)},{\bf(H3)}. Then for fixed $\Delta$, there exists a unique solution $(m_{k,i})_{k\in \I_{\Delta t}, i \in \I_{\Delta x}}$  to \eqref{LG_matrix} and,  defining $m_{\Delta}$ as in \eqref{m_Delta},  the following  hold: \medskip\\
{\rm(i)}{\rm[Initial condition]}    $\| m_{0}^*-m_{\Delta }(0,\cdot)\|_{L^2}= O((\Delta x)^{q+1})$ if $m_0^*\in H^{q+1}(\RR^d)$. \medskip\\
{\rm(ii)}{\rm[Mass conservation]}  $\int_{\RR^d} m_{\Delta}(t_{k},x) \dd x=1$ for $k\in \I_{\Dt}$. \medskip\\
{\rm(iii)}{\rm[$L^2$-stability]}   If  $b(t,\cdot)$ is differentiable for all $t\in [0,T]$, then $\max_{k\in \I_{\Dt}} \|m_{\Delta}(t_k,\cdot) \|_{L^2}$ is uniformly bounded with respect to $\Delta$ for $\Delta t$ small enough. 
\end{theorem}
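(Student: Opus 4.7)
The mass matrix $A$ in \eqref{def_mass_matrix} is the Gram matrix of the linearly independent family $\{\beta_i\}_{i\in \I_{\Delta x}}$, hence symmetric positive definite and invertible; the recursion $m_{k+1}=A^{-1}\sum_{\ell\in\I_d}\omega_\ell B_k^\ell m_k$ then uniquely determines $(m_{k,i})$ by induction on $k$. For (i), the initial equation of \eqref{LG_matrix} characterizes $m_\Delta(0,\cdot)$ as the $L^2$-orthogonal projection of $m_0^*$ onto $V_{\Delta x}:=\mathrm{span}\{\beta_i\}_{i\in \I_{\Delta x}}$. Since $\supp m_0^*\subset[-L_{\Delta t},L_{\Delta t}]^d$, the interpolant $I[m_0^*]$ from \eqref{definizione_I} reduces to a sum over $\I_{\Delta x}$ and hence belongs to $V_{\Delta x}$. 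By best approximation, $\|m_0^*-m_\Delta(0,\cdot)\|_{L^2}\leq \|m_0^*-I[m_0^*]\|_{L^2}$, and the $H^{q+1}$ analogue of \eqref{approximation_error_V_Deltax_q}, obtained by a standard Bramble--Hilbert and scaling argument on the reference cell, yields the $O((\Delta x)^{q+1})$ bound.

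\textbf{Item (ii).} The key tools are the partition-of-unity identity $\sum_{i\in\ZZ^d}\beta_i\equiv 1$ and the corresponding integral identity $\int_{\RR^d}\beta_j\,dx=(\Delta x)^d$ (which follows from $\int\widehat\beta\,d\xi=1$, obtained by integrating the periodic identity over one cell). Summing the recursion \eqref{LG} over $i\in \I_{\Delta x}$ and using $\sum_{i\in \I_{\Delta x}}\beta_i(x)=1$ on the relevant region---this is where the buffer $p\Delta x$ in $\OO_\Delta$ and the choice of $L_{\Delta t}$ enter, ensuring that $\supp m_\Delta(t_k,\cdot)$ and $y_k^\ell(\supp m_\Delta(t_k,\cdot))$ remain inside the interior zone---the LHS collapses to $\int m_\Delta(t_{k+1},x)\,dx$ and, together with $\sum_{\ell\in\I_d}\omega_\ell=1$, the RHS collapses to $\int m_\Delta(t_k,x)\,dx$. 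Applied to the initial equation, the same summation gives $\int m_\Delta(0,\cdot)\,dx=\int m_0^*(x)\,dx=1$ by \textbf{(H1)}, and induction on $k$ yields (ii).

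\textbf{Item (iii) and main obstacle.} Left-multiplying the recursion by $m_{k+1}^{\top}$, using $m_{k+1}^{\top}Am_{k+1}=\|m_\Delta(t_{k+1},\cdot)\|_{L^2}^2$ and the identity $(B_k^\ell m_k)_i=\int \beta_i(y_k^\ell(x))\,m_\Delta(t_k,x)\,dx$, yields
\begin{equation*}
\|m_\Delta(t_{k+1},\cdot)\|_{L^2}^2 = \sum_{\ell\in\I_d}\omega_\ell \int m_\Delta(t_{k+1},y_k^\ell(x))\,m_\Delta(t_k,x)\,dx.
\end{equation*}
Cauchy--Schwarz in $x$, followed by the change of variable $z=y_k^\ell(x)$, reduces the stability question to controlling the Jacobian of the implicit map \eqref{eq:CN}. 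Differentiating \eqref{eq:CN} in $x$ and solving for $\nabla_x y_k^\ell$ gives
\begin{equation*}
\nabla_x y_k^\ell(x) = \Bigl(I - \tfrac{\Delta t}{2}Db(t_{k+1},y_k^\ell(x))\Bigr)^{-1}\Bigl(I + \tfrac{\Delta t}{2}Db(t_k,x)\Bigr),
\end{equation*}
which, via a Neumann series valid for $\Delta t$ small, yields $|\det\nabla_x y_k^\ell(x)|^{-1}\leq 1+C\Delta t$ with $C$ depending only on $\|Db\|_{L^\infty}$. Combining these estimates produces $\|m_\Delta(t_{k+1},\cdot)\|_{L^2}\leq (1+C\Delta t)^{1/2}\|m_\Delta(t_k,\cdot)\|_{L^2}$, and a discrete Gr\"onwall over the $T/\Delta t$ time steps gives $\|m_\Delta(t_k,\cdot)\|_{L^2}\leq e^{CT/2}\|m_\Delta(0,\cdot)\|_{L^2}\leq e^{CT/2}\|m_0^*\|_{L^2}$, the last inequality following from the projection property used in (i). The main technical hurdle is precisely this Jacobian estimate: the implicit character of \eqref{eq:CN} is why differentiability of $b$ enters as a hypothesis of (iii), and one must verify that $I-\tfrac{\Delta t}{2}Db$ is uniformly invertible in $(t,x)$ and that the determinant expansion is $1+O(\Delta t)$ with constants independent of $x$, $\ell$, and $k$.
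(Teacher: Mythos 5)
Your proposal is correct and follows essentially the same route as the paper: positive definiteness of the Gram matrix for well-posedness, summation over the basis indices with the partition of unity for mass conservation, and the identity $\|m_\Delta(t_{k+1},\cdot)\|_{L^2}^2=\sum_\ell\omega_\ell\int m_\Delta(t_{k+1},y_k^\ell(x))m_\Delta(t_k,x)\,dx$ followed by Cauchy--Schwarz, the change of variables, and the Jacobian bound $|\det Dy_k^\ell|^{-1}\leq 1+C\Delta t$ derived from the implicit relation \eqref{eq:CN} for the $L^2$-stability. The only difference is that you spell out the projection/interpolation argument for item (i), which the paper simply cites from the literature.
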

\begin{proof}
	
The well-posedness of \eqref{LG_matrix}  follows from the positive definiteness of $A$ (see e.g. \cite[Proposition 6.3.1]{quarteronivalli94}) and assertion {\rm(i)} is proven in  \cite[Section 3.5]{quarteronivalli94}. In order to prove {\rm(ii)}, fix $k\in \I^*_{\Delta t}$ and sum over $i\in\ZZ^d$ in the first equation of \eqref{LG} to obtain
$$
\sum_{j\in \I_{\Dx }}m_{k+1,j} \sum_{i\in \ZZ^d}\int_{\OO_{\Delta} } \beta_{j}(x)\beta_{i}(x)  \dd x= \sum_{j\in \I_{\Dx }}m_{k,j}\sum_{\ell \in \I_d} \omega_\ell   \sum_{i\in \ZZ^d} \int_{\OO_{\Delta} } \beta_{j}(x) \beta_{i}(y_k^\ell(x))\dd x.
$$
Recalling that, for every $y\in \RR^d$, $\sum_{i\in \ZZ^d} \beta_i(y)=1$, the cardinality $\{ i \in \ZZ^d \, | \, \beta_{i}(y)\neq 0\}$ is bounded uniformly in $y$,  and $\sum_{\ell\in \I_{d}} \omega_\ell=1$, Fubini's theorem yields 
\be\label{integrali}\ba{rcl}
\int_{\OO_{\Delta}}m_{\Delta}(t_{k+1},x) \dd x&=&\sum_{j\in \I_{\Delta x}}m_{k+1,j} \int_{\OO_{\Delta}} \beta_{j}(x) \dd x\\[6pt]
\, &=& \sum_{j\in \I_{\Delta x}}m_{k,j} \int_{\OO_{\Delta}} \beta_{j}(x) \dd x\\[6pt]
\, &=& \sum_{j\in \I_{\Delta x}}m_{0,j} \int_{\OO_{\Delta}} \beta_{j}(x) \dd x\\[6pt]
\, &=& \int_{\OO_{\Delta}}m_{\Delta}(0,x) \dd x.
\ea\ee
Analogously, using the second equation in \eqref{LG} and summing over $i\in \ZZ^d$, we get that
\be\label{integrali_1}
\int_{\OO_{\Delta}}m_{\Delta}(0,x) \dd x= \int_{\OO_{\Delta}} m_0^*(x) \dd x=1.
\ee
Assertion {\rm(ii)} follows from \eqref{integrali}, \eqref{integrali_1},   and \eqref{m_Delta}.
Finally, let us show assertion {\rm(iii)}.  
For $k=0$,  {\rm(iii)} follows from Assumption~{\bf(H1)} and Theorem~\ref{Prop:stability}{\rm(i)}.
For $k\in \I^*_{\Delta t}$,   \eqref {LG} implies that
\be \ba{rcl}\label{eq_stimal2}
\|m_{\Delta}(t_{k+1},\cdot)\|_{L^{2}}^{2} &=& \sum_{\ell\in\I_d}\omega_{\ell}\sum_{i,j\in\I_{\Dx}}m_{k+1,i}m_{k,j}\int_{\OO_{\Delta}}\beta_{i}(x)\beta_{j}(y_k^\ell(x)) \dd x\\[6pt]
&=&\sum_{\ell \in \I_{d}}\omega_l\int_{\OO_{\Delta}}m_{\Delta}(t_{k}, y^\ell_k(x)) m_{\Delta}(t_{k+1},x) \dd x,
 \ea\ee
 and hence, by  the Cauchy-Schwarz inequality, 
\be\label{estimate_m_y}
\|m_{\Delta}(t_{k+1},\cdot)\|_{L^{2}} \leq \max_{\ell \in \I_d} \left(\int_{\OO_{\Delta}} |m_{\Delta}(t_{k}, y^\ell_k(x))|^2 \dd x\right)^{1/2}.
\ee 
In order to estimate the right-hand-side above, fix $x\in \RR^d$,  $\ell\in \I_d$, and  notice that
\be\label{derivative_of_y_k}
Dy^\ell_k(x)= I_{d} + \frac{\Delta t}{2}\bigg( D b(t_{k},x)+Db(t_{k+1},y^\ell_k(x))Dy^\ell_k(x)\bigg),
\ee
where $I_d$ denotes the $d\times d$ identity matrix. Since $Db(\cdot,\cdot)$ is bounded,  there exists   $\ov{\Delta t}>0$ such that for all $k\in \I_{\Dt}^*$   and $\Delta t \in [0,\ov{\Dt}]$, $y^\ell_k$ is one-to-one, and, for all $z\in \RR^d$, the matrix $I_{d} - \frac{\Delta t}{2}Db(t_{k+1},z)$ is invertible. Therefore,  by \eqref{derivative_of_y_k}, 
\be\label{derivative_of_y_k_bis}
Dy^\ell_k(x)= \left(I_{d} - \frac{\Delta t}{2}Db(t_{k+1},y^\ell_k(x))\right)^{-1} \left(I_{d} + \frac{\Dt}{2}Db(t_{k},x)\right),
\ee
from which we deduce that $Dy^\ell_k(x)$ is invertible.  Then, by the change of variable formula, we get that 
\be\label{estimate_l2_norm_k+1_with_Phi}
\int_{\OO_{\Delta}} |m_{\Delta}(t_{k}, y^\ell_k(x))|^2 \dd x= \int_{ y^\ell_k(\OO_{\Delta})} |m_{\Delta}(t_{k}, z)|^2 \big|\mbox{det}  \left(  Dy^\ell_{k}((y^\ell_{k})^{-1}(z)) \right)\big|^{-1} \dd z.
\ee
On the other hand, by \eqref{derivative_of_y_k_bis} and Jacobi's formula, for all $x\in \RR^d$ we have
\be\ba{rcl}
 \left[ {\mbox{det}} \left(  Dy^\ell_{k}(x) \right)\right]^{-1}&=& \ds \frac{\text{det} \left(I_{d}  - \frac{\Delta t}{2}D b(t_{k+1},y ^\ell_k(x))\right)}{ \text{det}   \left( I_{d}+ \frac{\Dt}{2}D b(t_{k},x \right))} \\[20pt]
\; &=& \ds \frac{1- \frac{\Dt}{2} \text{Tr}\left(D b(t_{k+1},y^\ell_k(x)) \right)+O((\Dt)^2)}{1 + \frac{\Dt}{2}\text{Tr}\left(D b(t_{k},x) +O((\Dt)^2\right)}\\[20pt]
\; &=& \ds \frac{1 - \frac{\Dt}{2} \text{div} \left( b(t_{k+1},y^\ell_k(x))\right)+\ O((\Dt)^2) }{1+ \frac{\Dt}{2}\text{div}\left( b(t_{k},x) \right)+ O((\Dt)^2)}.
\ea
\ee
Thus, there exists a constant $C>0$, independent of $x$,  $k$, $\ell$,  and  $\Delta t$, such that
  \be
\big|\left[\mbox{det}  \left( Dy^\ell_{k}(x)\right)\right]^{-1}\big|\leq  1+C\Dt.
\ee
Combining the previous inequality with  \eqref{estimate_l2_norm_k+1_with_Phi}  yields 
\be\label{estimate_aux_1}
\int_{\OO_{\Delta}} |m_{\Delta}(t_{k}, y^\ell_k(x))|^2 \dd x  \leq  (1+C\Dt) \|m_{\Delta}(t_{k},\cdot)\|_{L^2}^{2},
\ee 
and hence, by  \eqref{estimate_m_y}, 
$$
\|m_{\Delta}(t_{k+1},\cdot)\|_{L^{2}} \leq  (1+C\Dt)^{\half} \|m_{\Delta}(t_{k},\cdot)\|_{L^2}^{2}.
$$
Thus,
$$
\|m_{\Delta}(t_{k+1},\cdot)\|_{L^{2}}\leq \left(1+ \frac{CT}{N_{\Delta t}}\right)^{N_{\Delta t}/2}\|m_{\Delta}(0,\cdot)\|_{L^{2}}\leq e^{CT/2}\|m_{\Delta}(0,\cdot)\|_{L^{2}},
$$
from which assertion {\rm(iii)} follows.
\end{proof}

\begin{remark}\label{uniform_integrability_over_compact_sets} Notice that Proposition~\ref{Prop:stability}{\rm(iii)} and the Cauchy-Schwarz inequality imply that, for any compact set $K\subseteq \RR^d$, there exists $C_{K}>0$, independent of $\Delta$ for $\Delta t$ small enough, such that 
$$
\max_{k\in \I_{\Delta t}}    \int_{K} |m_{\Delta }(t_k,x)| \dd x      \leq  C_{K}. 
$$
\end{remark} \vspace{0.3cm}

In the following, we still denote by $m_{\Delta}$  its  extension to $[0,T]\times \RR^d$, defined as  
\be\label{m_extension}
m_{\Delta}(t,x)= \frac{t-t_k}{\Delta t} m_{\Delta}(t_{k+1},x) + \frac{t_{k+1}-t}{\Delta t} m_{\Delta}(t_k,x) \quad \text{if $(t,x) \in [t_k, t_{k+1}]\times \RR^d$ ($k\in \I_{\Dt}^*$)}. 
\ee 
Notice that \eqref{m_extension} and  Theorem~\ref{Prop:stability}{\rm(ii)}-{\rm(iii)} imply that
\be\label{integrale_1_stima_l2}
\int_{\OO_{\Delta}  } m_{\Delta}(t,x) \dd x=1 \quad \text{for all } t\in[0,T] \quad \text{and} \quad \max_{t\in [0,T]} \|m_{\Delta}(t,\cdot) \|_{L^2} \leq C,
\ee
for some $C>0$, independent of $\Delta$  for $\Delta t$ small enough.

For $k\in \NN\cup\{\infty\}$, we denote by $C^{k}_{0}(\RR^d)$ the set of functions of class $C^{k}$ with compact support. 
\vspace{0.05cm}
\begin{proposition}
\label{thm:equicontcons} 
Under {\bf(H1)}-{\bf(H3)}, the following  hold: \smallskip\\
{\rm(i)}{\rm[Equicontinuity]} Let $\phi \in C_0^{q+1}(\RR^d)$. Then there exists $C_\phi>0$ such that for all $\Delta$, with $\Delta t$ small enough and  $(\Delta x)^{q+1}\leq \Delta t$, we have  
\be\label{eq:prop_equicontinuity}
\left|\int_{\RR^d}\phi(x)m_{\Delta}(t,x) \dd x - \int_{\RR^d}\phi(x)m_{\Delta}(s,x)  \dd x\right| \leq  C_{\phi}|t-s|\quad \text{for all $s, \, t\in [0,T]$}.
\ee
{\rm(ii)}{\rm[Consistency]} Assume that  $b(t,\cdot)\in C^{q+1}(\RR^d)$ for all $t\in[0,T]$ and let $\phi \in C^\infty_0(\RR^d)$. Then for any $k \in \I^*_{\Delta t}$ and   $\Delta$, with $\Delta t$ small enough and  $(\Delta x)^{q+1}\leq \Delta t$, we have 
\be\label{prop:consistency_eq}
\ba{l}
\ds \int_{\RR^d}\phi(x) \left(m_{\Delta}(t_{k+1},x) \right.-\left. m_{\Delta}(t_{k},x)\right)\dd x  
= \ds \int_{t_k}^{t_{k+1}} \int_{\RR^d}\left( \frac{\sigma^2}{2} \Delta \phi(x)+\langle b(s,x),\nabla \phi(x)\rangle\right)m_{\Delta}(s,x) \dd x \dd s	 \\[13pt]
\hspace{6.3cm} \ds +O\left(({\Delta x})^{q+1}+({\Delta t})^{2}+\Delta t \omega_{\phi}(\Delta t)\right),
\ea
\ee
where $\omega_{\phi} : [0,\infty)\to \RR$ is a modulus of continuity of $b$ on $[0,T]\times \supp(\phi)$.
\end{proposition}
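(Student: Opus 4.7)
The plan is to test the scheme identity \eqref{LG} against the Lagrange interpolant $I[\phi]$ and then swap $I[\phi]$ back to $\phi$ using the interpolation estimate \eqref{approximation_error_V_Deltax_q}, reducing both assertions to a pointwise expansion of $\phi(y^\ell_k(x)) - \phi(x)$. Multiplying the first equation of \eqref{LG} by $\phi(x_i)$ and summing over $i \in \I_{\Delta x}$ gives the exact identity
\begin{equation*}
\int_{\RR^d} I[\phi](x)\, m_{\Delta}(t_{k+1},x)\,\dd x \;=\; \sum_{\ell \in \I_d} \omega_\ell \int_{\RR^d} I[\phi](y^\ell_k(x))\, m_{\Delta}(t_k, x)\,\dd x.
\end{equation*}
Since $\phi \in C^{q+1}_0(\RR^d)$ and $y^\ell_k$ is Lipschitz uniformly in $\ell$ and $k$, both $\phi - I[\phi]$ and its composition with $y^\ell_k$ vanish outside a fixed compact set $K \subset \RR^d$ independent of $\Delta$ for $\Delta t$ small enough. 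By \eqref{approximation_error_V_Deltax_q} we have $\|\phi - I[\phi]\|_{L^\infty(K)} = O((\Delta x)^{q+1})$; coupled with the uniform local $L^1$-bound of Remark~\ref{uniform_integrability_over_compact_sets}, this lets me replace $I[\phi]$ by $\phi$ on both sides at cost $O((\Delta x)^{q+1}) = O(\Delta t)$.

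For part (i), it suffices to prove the per-step bound $|\int \phi (m_\Delta(t_{k+1},\cdot) - m_\Delta(t_k,\cdot))\,\dd x| \leq C_\phi \Delta t$, since the piecewise-linear extension \eqref{m_extension} then upgrades this to Lipschitz continuity in $t$. A first-order Taylor expansion gives $\phi(y^\ell_k(x)) - \phi(x) = \langle \nabla \phi(x), y^\ell_k(x) - x\rangle + O(|y^\ell_k(x) - x|^2)$ with pointwise remainder $O(\Delta t)$, since $|y^\ell_k(x) - x| = O(\sqrt{\Delta t})$. Averaging the noise term $\sqrt{\Delta t}\sigma e^\ell$ in \eqref{eq:CN} over $\ell$ yields zero because $\sum_\ell \omega_\ell e^\ell = 0$ by \eqref{rv}, so the first-order contribution is in fact $O(\Delta t)$ after averaging, giving the per-step bound.

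For part (ii), I expand $\phi(y^\ell_k(x)) - \phi(x)$ to third order in $z^\ell := y^\ell_k(x) - x$ with fourth-order remainder of size $O(|z^\ell|^4) = O((\Delta t)^2)$. Writing $z^\ell = \tfrac{\Delta t}{2}(b(t_k,x) + b(t_{k+1}, y^\ell_k(x))) + \sqrt{\Delta t}\sigma e^\ell$ and Taylor-expanding $b(t_{k+1}, y^\ell_k(x))$ around $x$, the moment identities $\sum_\ell \omega_\ell e^\ell = 0$, $\sum_\ell \omega_\ell e^\ell (e^\ell)^T = I_d$, and the vanishing of all mixed third moments $\sum_\ell \omega_\ell e^\ell_i e^\ell_j e^\ell_k = 0$ (by the symmetry of \eqref{rv} and componentwise independence) yield
\begin{equation*}
\sum_\ell \omega_\ell z^\ell = \tfrac{\Delta t}{2}(b(t_k,x) + b(t_{k+1}, x)) + O((\Delta t)^2), \qquad \sum_\ell \omega_\ell z^\ell (z^\ell)^T = \sigma^2 \Delta t\, I_d + O((\Delta t)^2),
\end{equation*}
while the third-order Taylor contribution is $O((\Delta t)^2)$ as well. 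Combining,
\begin{equation*}
\sum_\ell \omega_\ell (\phi(y^\ell_k(x)) - \phi(x)) = \Delta t \Big(\tfrac{\sigma^2}{2} \Delta\phi(x) + \langle b(t_k,x), \nabla\phi(x)\rangle\Big) + O\big((\Delta t)^2 + \Delta t\,\omega_\phi(\Delta t)\big),
\end{equation*}
where the $\Delta t\, \omega_\phi(\Delta t)$ absorbs $\tfrac{\Delta t}{2}(b(t_{k+1},x) - b(t_k,x))$. Integrating against $m_\Delta(t_k,\cdot)$ and then comparing with the right-hand side of \eqref{prop:consistency_eq}---replacing $b(s,\cdot)$ by $b(t_k,\cdot)$ at cost $O(\omega_\phi(\Delta t))$ pointwise, and $m_\Delta(s,\cdot)$ by $m_\Delta(t_k,\cdot)$ at cost $O(\Delta t)$ via \eqref{m_extension} combined with the per-step bound from part (i)---produces the claimed error estimate.

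The principal obstacle is the moment computation when the Crank-Nicolson map \eqref{eq:CN} defines $y^\ell_k$ \emph{implicitly}: extracting the clean second-moment identity $\sum_\ell \omega_\ell z^\ell (z^\ell)^T = \sigma^2 \Delta t\, I_d + O((\Delta t)^2)$ requires substituting $y^\ell_k(x) = x + \sqrt{\Delta t}\sigma e^\ell + O(\Delta t)$ back into the drift and carefully tracking which cross terms survive the averaging over $\ell$. A secondary technical point is that $m_\Delta(t_k, \cdot)$ is supported on the $\Delta t$-dependent domain $\OO_\Delta$, but every integrand appearing in the expansions is compactly supported through $\phi$ and its derivatives, so integration is effectively confined to a fixed compact set on which the $L^2$-stability of Theorem~\ref{Prop:stability}{\rm(iii)} and Cauchy-Schwarz provide the required uniform bounds.
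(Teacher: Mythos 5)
Your proposal is correct and follows essentially the same route as the paper: test the scheme against $I[\phi]$, trade $I[\phi]$ for $\phi$ using \eqref{approximation_error_V_Deltax_q} under the constraint $(\Delta x)^{q+1}\leq \Delta t$, and conclude via the one-step weak expansion of the Crank--Nicolson characteristics together with the moment identities of $\xi$, exactly as in the paper's use of \eqref{eq:stimaCNoperatore}, the bound of Remark~\ref{uniform_integrability_over_compact_sets}, and the final substitutions $b(t_k,\cdot)\to b(s,\cdot)$ and $m_\Delta(t_k,\cdot)\to m_\Delta(s,\cdot)$. The only cosmetic differences are that you re-derive the moment expansion explicitly (correctly, including the implicit-drift feedback) where the paper cites \eqref{eq:stimaCNoperatore}, and that you use the $L^1$/$L^\infty$ pairing where the paper uses Cauchy--Schwarz with the $L^2$-stability of Theorem~\ref{Prop:stability}{\rm(iii)}.
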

The proof is given in the Appendix.
Let us denote by  $\D'(\RR^d)$ the space of distributions, which we endow with the weak$^*$ topology. In the following,  for every $ \Delta=(\Dt, \Dx) \in (0,\infty)^2$ and  $t\in [0,T]$, we identify $m_{\Delta}(t, \cdot)$ with the map
$$
C_{0}^{\infty}(\RR^d) \ni \phi \mapsto \int_{\RR^d}\phi(x)m_{\Delta }(t,x) \dd x \in \RR,
$$
which, by Remark~\ref{uniform_integrability_over_compact_sets}, is a regular distribution.  For every $\Delta$, let us denote, with a slight abuse of notation, $m_{\Delta}$ the map  $[0,T]\ni t \mapsto m_{\Delta}(t,\cdot) \in \D'(\RR^d)$.  Notice that Proposition~\ref{thm:equicontcons}{\rm(i)} implies that $m_{\Delta}\in C([0,T];\D'(\RR^d))$. 

\begin{lemma}\label{compatezza_in_D_primo}  Suppose that{ \bf(H1)}{\rm(i)},{\bf(H2)}  hold. Then there exists $\Delta t_0>0$ such that the family 
$\mathcal{M}=\{m_{\Delta} \, | \, \Delta t\leq \Delta t_0, \;  (\Delta x)^{q+1}\leq \Delta t \}$ is relatively compact in $C([0,T];\D'(\RR^d))$.
\end{lemma}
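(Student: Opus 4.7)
The plan is to invoke an Arzelà--Ascoli-type argument adapted to the target $C([0,T]; \mathcal{D}'(\RR^d))$: since $\mathcal{D}'(\RR^d)$ with the weak-$*$ topology is not metrizable, the statement will be established in the sequential sense, namely by showing that every sequence from $\mathcal{M}$ admits a subsequence that converges in $C([0,T]; \mathcal{D}'(\RR^d))$.

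For each fixed test function $\phi \in C_0^\infty(\RR^d) \subset C_0^{q+1}(\RR^d)$, I would consider the scalar maps $f_n^\phi : [0,T] \to \RR$ defined by $f_n^\phi(t) := \int_{\RR^d} \phi(x) m_{\Delta_n}(t,x) \dd x$. Cauchy--Schwarz together with \eqref{integrale_1_stima_l2} delivers the uniform bound $|f_n^\phi(t)| \le C \|\phi\|_{L^2}$, while Proposition~\ref{thm:equicontcons}{\rm(i)} provides the equicontinuity of the family $\{f_n^\phi\}_n$ on $[0,T]$. The classical Arzelà--Ascoli theorem then yields relative compactness of $\{f_n^\phi\}_n$ in $C([0,T]; \RR)$. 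Choosing a countable subset $\{\phi_k\}_{k\in\NN} \subset C_0^\infty(\RR^d)$ dense in the $L^2$-norm and running a Cantor diagonal extraction, I would produce a subsequence (still labeled $(m_{\Delta_n})_n$) along which $f_n^{\phi_k}$ converges uniformly on $[0,T]$ for every $k$.

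To extend convergence to every $\phi \in C_0^\infty(\RR^d)$, I would approximate $\phi$ in $L^2$ by a suitable $\phi_k$ within accuracy $\varepsilon$ and exploit \eqref{integrale_1_stima_l2} to estimate
\[
\sup_{t \in [0,T]} |f_n^\phi(t) - f_m^\phi(t)| \le \sup_{t \in [0,T]} |f_n^{\phi_k}(t) - f_m^{\phi_k}(t)| + 2 C \varepsilon.
\]
The first term vanishes as $n, m \to \infty$, so $(f_n^\phi)_n$ is Cauchy in $C([0,T]; \RR)$ and converges to some $\psi_\phi$. Setting $\ell_t(\phi) := \psi_\phi(t)$, the bound $|\ell_t(\phi)| \le C \|\phi\|_{L^2}$ shows $\ell_t \in \mathcal{D}'(\RR^d)$ for every $t$; the uniformity in $t$ of each scalar convergence yields $\ell \in C([0,T]; \mathcal{D}'(\RR^d))$ and $m_{\Delta_n} \to \ell$ in this space.

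The main obstacle I anticipate is the density-plus-bound passage in the last step, which switches from convergence on the countable family $\{\phi_k\}_k$ to convergence against an arbitrary $\phi \in C_0^\infty(\RR^d)$. This step depends essentially on the uniform $L^2$-stability recorded in \eqref{integrale_1_stima_l2} (and hence on Theorem~\ref{Prop:stability}{\rm(iii)}), which is proved under the differentiability of the drift; reconciling this with the bare hypotheses {\bf(H1)}, {\bf(H2)} stated in the lemma is the technical subtlety of the argument, and will presumably rest on the regularity that {\bf(H2)} transfers to the drift arising in the MFG context.
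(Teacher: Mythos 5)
Your argument is correct and rests on the same two pillars as the paper's proof --- equicontinuity from Proposition~\ref{thm:equicontcons}{\rm(i)} and the uniform bound \eqref{integrale_1_stima_l2} --- but it implements the compactness step differently. The paper invokes an Arzel\`a--Ascoli theorem for functions valued in a (non-metrizable) uniform space, which reduces the claim to pointwise relative compactness of $\{m_\Delta(t,\cdot)\}$ in $\D'(\RR^d)$, and obtains the latter from the Banach--Alaoglu--Bourbaki theorem applied to the polar of a neighborhood of $0$ in $C_0^\infty(\RR^d)$. You instead work sequentially: scalar Arzel\`a--Ascoli for each $f_n^\phi$, a Cantor diagonal over a countable family of test functions dense in $L^2$, and a density-plus-uniform-bound argument to pass to arbitrary $\phi$. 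What the paper's route buys is brevity and a statement of genuine (topological) relative compactness; what yours buys is a completely elementary, self-contained construction that also identifies each limit $\ell_t$ as an $L^2$-bounded functional, and it delivers exactly the sequential compactness that is actually used downstream in the proof of Theorem~\ref{weakconvergence}. Two remarks. First, your reliance on \eqref{integrale_1_stima_l2} (hence on Theorem~\ref{Prop:stability}{\rm(iii)} and the differentiability of the drift) is shared by the paper's own proof, which also bounds $\|m_\Delta(t,\cdot)\|_{L^1(\ov B(0,1))}$ through the $L^2$ stability; the mismatch with the hypotheses displayed in the lemma is an imprecision of the statement, not a gap in your argument. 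Second, when you cite Proposition~\ref{thm:equicontcons}{\rm(i)} for equicontinuity, note that the bound there is of the form $C_\phi(|t-s|+\Delta t)$ rather than a genuine modulus independent of $n$; the extra $O(\Delta t_n)$ term vanishes along the family, so the standard ``asymptotic equicontinuity'' refinement of Arzel\`a--Ascoli applies, but it is worth saying so explicitly.
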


The proof is given in the Appendix.
We conclude the section with the following  convergence result for scheme \eqref{LG}.  
\begin{theorem}
\label{weakconvergence}  
Assume {\bf(H1)} and {\bf(H3)},    $m_{0}^*\in H^{q+1}(\RR^d)$,  $b$ bounded and $ b(t,\cdot)\in C^{q+1}(\RR^d)$ for all $t\in[0,T]$.  Consider a sequence   $\left( \Delta_n \right)_{n\in \NN} = \left((\Delta t_n,\Delta x_n)\right)_{n\in \NN} \subseteq (0,\infty)^2$ such that, as $n\to \infty$, $\Delta_n \to (0,0)$ and  ${(\Delta x}_{n})^{q+1}/{\Delta t}_n \to 0$. Setting  $m^{n}:= m_{\Delta_n}$,  as $n\to \infty$ we have that $(m^n)_{n\in \NN}$  converges to $m^*$ in $C([0,T];\D'(\RR^d))$ and weakly  in  $L^{2}\left([0,T]\times \RR^d \right)$, where $m^*$ is the unique classical solution to \eqref{FP}.
\end{theorem}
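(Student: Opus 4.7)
The plan combines the compactness in Lemma~\ref{compatezza_in_D_primo}, the $L^2$-stability of Theorem~\ref{Prop:stability}{\rm(iii)}, and the consistency in Proposition~\ref{thm:equicontcons}{\rm(ii)} with the uniqueness statement of Theorem~\ref{teorema_well_posedness}{\rm(iv)}. First, Lemma~\ref{compatezza_in_D_primo} provides a subsequence, still denoted $(m^n)$, converging in $C([0,T];\mathcal{D}'(\RR^d))$ to some $\bar m$. Second, Theorem~\ref{Prop:stability}{\rm(iii)} together with \eqref{integrale_1_stima_l2} shows $(m^n)$ is bounded in $L^\infty([0,T];L^2(\RR^d))\subset L^2([0,T]\times\RR^d)$, so the Banach--Alaoglu theorem yields a further subsequence converging weakly in $L^2([0,T]\times\RR^d)$ to some $\tilde m$. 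Since weak $L^2$-convergence implies convergence in $\mathcal{D}'((0,T)\times\RR^d)$, the two limits coincide, so $\bar m=\tilde m\in L^2([0,T]\times\RR^d)$.

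The crux is to identify $\bar m$ as the distributional solution of \eqref{FP}. For a tensor test function $\varphi(t,x)=\psi(t)\phi(x)$ with $\psi\in C^\infty_c([0,T))$ and $\phi\in C_0^\infty(\RR^d)$, I apply Proposition~\ref{thm:equicontcons}{\rm(ii)} at each step $k\in\mathcal{I}_{\Delta t_n}^*$, multiply by $\psi(t_k)$, and sum over $k$. An Abel summation on the left transfers the discrete time difference onto $\psi$; the boundary term at $T$ vanishes (as $\psi$ has compact support in $[0,T)$) and, by the uniform convergence of $t\mapsto\int_{\RR^d}\phi(x)\,m^n(t,x)\,dx$ on $[0,T]$ guaranteed by the $C([0,T];\mathcal{D}'(\RR^d))$ convergence (with equicontinuity from Proposition~\ref{thm:equicontcons}{\rm(i)} controlling the gap between the nodal and linearly extended values), the resulting Riemann sum tends to
\[
-\int_0^T\psi'(t)\int_{\RR^d}\phi(x)\,\bar m(t,x)\,dx\,dt \; - \; \psi(0)\int_{\RR^d}\phi(x)\,\bar m(0,x)\,dx.
\]
The right-hand side $\sum_k\psi(t_k)\int_{t_k}^{t_{k+1}}\!\!\int_{\RR^d}\!\bigl(\tfrac{\sigma^2}{2}\Delta\phi+\langle b(s,\cdot),\nabla\phi\rangle\bigr)m^n\,dx\,ds$ converges to $\int_0^T\psi(t)\int_{\RR^d}\bigl(\tfrac{\sigma^2}{2}\Delta\phi+\langle b,\nabla\phi\rangle\bigr)\bar m\,dx\,dt$, thanks to the weak $L^2$-convergence of $m^n$ tested against the fixed $L^2$ function $\psi(s)\bigl(\tfrac{\sigma^2}{2}\Delta\phi(x)+\langle b(s,x),\nabla\phi(x)\rangle\bigr)$, which is bounded and compactly supported in $x$ thanks to $\phi$. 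The cumulative consistency error is at most $N_{\Delta t_n}\|\psi\|_\infty\cdot O\!\left((\Delta x_n)^{q+1}+(\Delta t_n)^2+\Delta t_n\,\omega_\phi(\Delta t_n)\right)=O\!\left((\Delta x_n)^{q+1}/\Delta t_n+\Delta t_n+\omega_\phi(\Delta t_n)\right)$, which vanishes by the scaling hypothesis on $\Delta_n$.

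For the initial datum, Theorem~\ref{Prop:stability}{\rm(i)} combined with $m_0^*\in H^{q+1}(\RR^d)$ yields $\|m^n(0,\cdot)-m_0^*\|_{L^2}\to 0$, hence $\bar m(0,\cdot)=m_0^*$. Thus $\bar m$ is an $L^2$-distributional solution of \eqref{FP} with the prescribed initial condition, and Theorem~\ref{teorema_well_posedness}{\rm(iv)} forces $\bar m=m^*$. Because the limit is uniquely determined and every subsequence admits a further subsequence converging to $m^*$ in both topologies, the whole sequence $(m^n)$ converges to $m^*$ in $C([0,T];\mathcal{D}'(\RR^d))$ and weakly in $L^2([0,T]\times\RR^d)$.

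The main difficulty I anticipate is the passage to the limit in the consistency identity: three distinct modes of convergence have to be coordinated (in $C([0,T];\mathcal{D}')$, weakly in $L^2$, and strongly in $L^2$ for the initial data), the discrete summation by parts must be arranged so that its limit is precisely the weak formulation of \eqref{FP}, and the cumulative consistency error must be controlled. It is here that the hypothesis $(\Delta x_n)^{q+1}/\Delta t_n\to 0$ becomes essential, since a naive bound relying only on $\Delta t_n,\Delta x_n\to 0$ separately would not be sufficient to absorb the accumulation over $N_{\Delta t_n}\sim 1/\Delta t_n$ time steps.
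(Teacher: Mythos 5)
Your proposal is correct and follows essentially the same route as the paper: extract a weak $L^2$ limit from the stability bound, identify it as the distributional solution via Abel summation combined with Proposition~\ref{thm:equicontcons}, recover the initial datum from Theorem~\ref{Prop:stability}{\rm(i)}, and conclude by the uniqueness in Theorem~\ref{teorema_well_posedness}{\rm(iv)}. The only (immaterial) difference is that you take $\psi\in C^\infty_c([0,T))$ and fold the initial condition into the weak formulation through the boundary term $\psi(0)\int\phi\,\bar m(0,\cdot)\,dx$, whereas the paper tests against functions vanishing at both endpoints and handles $t=0$ separately via the continuity of $\widehat m$ in $\D'(\RR^d)$.
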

\begin{proof}

By Theorem  \ref{Prop:stability}{\rm(iii)}, the sequence $(m^n)_{n\in \NN}$ is bounded in $L^{2}([0,T]\times \RR^d)$. Thus, there exists  $\widehat{m}$ in $ L^{2}([0,T]\times\RR^d)$ such that, as $n\to \infty$ and  up to  some subsequence,  $m^{n}$   converges weakly to $\widehat{m}$ in  $ L^{2}([0,T]\times\RR^d)$.
  
 Let us first show that for any $\phi \in C^{\infty}_0((0,T)\times \RR^d)$, we have
 \be\label{eq:secondweakformula}
\int_{0}^{T}\int_{\RR^d} \left[ \partial_t \phi(t,x)-\frac{\sigma^2}{2} \Delta \phi(t,x) -\langle b(s,x), \nabla \phi (t,x)\rangle \right] \widehat{m}(t,x) \dd x \dd t= 0.
\ee
 Let $\eta \in C^{\infty}_{0}([0,T])$, $\psi \in C^{\infty}_{0}(\RR^d)$ and suppose that $\phi$ has the form $\phi= \eta \psi  \in C_{0}^{\infty}([0,T]\times\RR^d)$. Denote by $K\subset \RR^d$ the support of $\psi$.  By \eqref{m_extension} and Proposition~\ref{thm:equicontcons}{\rm(i)}, we have
 \be\label{primo_svilupo}
\ba{l}
\int_{0}^{T}\int_{\RR^d} \partial_t \phi(t,x) m^{n}(t,x) \dd x \dd t= \ds\sum_{k=0}^{N_{\Delta t_n -1}}\int_{t_k}^{t_{k+1}} \int_{K} \partial_t \phi(t,x) m^{n}(t_k,x) \dd x \dd t  \\[10pt]
\hspace{4.85cm}\ds+\sum_{k=0}^{N_{\Delta t_n -1}}\int_{t_k}^{t_{k+1}} \int_{K} \partial_t \phi(t,x) ( m^{n}(t_{k+1},x) - m^{n}(t_k,x))\frac{t - t_{k}}{\Delta t_n}\dd x \dd t \\[10pt]
\hspace{4.5cm} = \ds\sum_{k=0}^{N_{\Delta t_n -1}}\int_{t_k}^{t_{k+1}} \int_{K} \partial_t \phi(t,x) m^{n}(t_k,x) \dd x \dd t  + O (\Delta t_n).
\ea \ee
On the other hand, by Remark~\ref{uniform_integrability_over_compact_sets} we have
\be\label{second_svilupo}
\ba{rl}
\ds\sum_{k=0}^{N_{\Delta t_n -1}}\int_{t_{k}}^{t_{k+1}} \int_{K} \partial_t \phi(t,x) m^{n}(t_{k},x) \dd x \dd t 
&= \ds\sum_{k=0}^{N_{\Delta t_n -1}} \Delta t_n \int_{K} \partial_t \phi(t_{k},x) m^{n}(t_{k},x) \dd x   + O\left( \Delta t_n \right)\\[10pt]
&=\ds\sum_{k=0}^{N_{\Delta t_n}-1}\Delta t_{n}\dot{\eta}(t_k)\int_{K}\psi(x) m^{n}(t_k,x) \dd x + O(\Delta t_n)\\[10pt]
&=\ds\sum_{k=0}^{N_{\Delta t_n}-1}(\eta(t_{k+1}) -\eta(t_k))\int_{K}\psi(x) m^{n}(t_k,x) \dd x + O(\Delta t_n)\\[10pt]
&= \ds\sum_{k=0}^{N_{\Delta t_n}-2}\eta(t_{k+1})\left(\int_{K} \psi(x)[m^{n}(t_k,x)- m^n(t_{k+1},x)] \dd x  \right) \\[18pt]
& \; \; \; \; + O(\Delta t_n).
\ea
\ee
By \eqref{primo_svilupo}, \eqref{second_svilupo} and using that $\psi$ vanishes outside $K$, we get
\be\label{terzosviluppo}\ba{rcl}
\ds \int_{0}^{T}\int_{\RR^d} \partial_t \phi(t,x) m^{n}(t,x) \dd x \dd t&=& \ds \ds\sum_{k=0}^{N_{\Delta t_n}-2}\eta(t_{k+1})\left(\int_{\RR^d} \psi(x)[m^{n}(t_k,x)- m^n(t_{k+1},x)] \dd x  \right)\\[15pt]
\; & \; & + O(\Delta t_n).
\ea
\ee
Using \eqref{terzosviluppo}, Proposition~\ref{thm:equicontcons}{\rm(ii)}, and  Remark~\ref{uniform_integrability_over_compact_sets}, we have
$$
\ba{l} 
\ds \int_{0}^{T}\int_{\RR^d} \partial_t \phi(t,x) m^{n}(t,x) \dd x \dd t=\ds \sum_{k=0}^{N_{\Delta t_n}-1} \eta(t_{k+1})\int_{t_k}^{t_{k+1}} \int_{\RR^d}\left( \frac{\sigma^2}{2} \Delta \psi (x) +\langle b(s,x), \nabla \psi(x)\rangle \right)m^{n}(s,x) \dd x \dd s \\[18pt]
\hspace{5cm} +O(({\Delta x}_{n})^{q+1}/\Delta t_n+{\Delta t}_{n} +\omega(\Delta t_n))\\[10pt]
\hspace{4.6cm}= \ds \int_{0}^{T}\int_{\RR^d}\left(  \frac{\sigma^2}{2} \Delta \phi(t,x) +\langle b(s,x) , \nabla \phi (t,x)\rangle\right)m^{n}(t,x) \dd x \dd t \\[18pt]
\hspace{5cm}+O(({\Delta x}_{n})^{q+1}/\Delta t_n+{\Delta t}_{n}+\omega(\Delta t_n)),
\ea$$
where $\omega: [0,\infty)\to \RR$ is a modulus of continuity of $b$ on $[0,T]\times K$. Thus, 
$$\int_{0}^{T}\int_{\RR^{d}} \left[ \partial_t \phi(t,x) -\frac{\sigma^2}{2} \Delta \phi(t,x)-\langle b(s,x), \nabla \phi (t,x)\rangle\right] m^{n}(t,x) \dd x \dd t= O(({\Delta x}_{n})^{q+1}/\Delta t_n+{\Delta t}_{n}+\omega(\Delta t_n))
$$
and hence, passing to the weak limit in $L^2([0,T]\times \RR^d)$, we get  
\be
\int_{0}^{T}\int_{\RR^d} \left[ \partial_t \phi(t,x) -\frac{\sigma^2}{2} \Delta \phi(t,x)-\langle b(s,x), \nabla \phi (t,x)\rangle\right] \widehat{m}(t,x) \dd x \dd t= 0.
\ee
Since the vector space spanned by  $\{ \eta \psi \; |  \; \eta \in C^{\infty}_{0}((0,T)), \; \psi \in C^{\infty}_{0}(\RR^d) \}$ is dense in $C^{1,2}_0((0,T)\times \RR^d)$ (as in \cite[Corollary 1.6.2 of the Weierstrass Approximation Theorem]{narasimhan1973analysis}), 
we get that \eqref{eq:secondweakformula} holds for any $\phi \in C_{0}^{1,2}((0,T)\times \RR^d)$. 

Finally, let us show that  for any $\phi \in C_{0}(  \RR^d)$
 \be \label{eq:datoiniziale}
 \int_{\RR^d}\phi(x) (\widehat{m}(t,x)-m_0^*(x))\dd x\to0\quad {\mbox as\;}t\to 0^+.\ee
 
By Lemma \ref{compatezza_in_D_primo},  we have that $\widehat{m} \in C([0,T];\D'(\RR^d))$. Moreover, by \cite[Lemma 2.1]{figalli08}, for any $t\in [0,T]$ and  for every $\phi\in C_{0}(\RR^d)$, it holds that
\be\label{continuous_representative}
\lim_{ s\to t, \, s\in [0,T]} \int_{\RR^{d}} \phi(x) \widehat{m}(s,x) \dd x =  \int_{\RR^{d}} \phi(x) \widehat{m}(t,x) \dd x.
\ee
 
Since  Theorem~\ref{Prop:stability}{\rm(i)} implies that $\widehat{m}(0,\cdot)= m_0^*(\cdot)$,
\eqref{eq:datoiniziale} follows from  \eqref{continuous_representative} with $t=0$. Thus, the result follows from \eqref{eq:secondweakformula}, \eqref{eq:datoiniziale} and the uniqueness result in Theorem~\ref{teorema_well_posedness}{\rm(iv)}.
\end{proof}

\begin{remark} The convergence of the sequence $(m^n)_{n\in \NN}$ to $m^*$ in the previous theorem is rather weak. On the other hand,  to the best of our knowledge this is the first convergence result of a high-order LG scheme for equation \eqref{FP}. Notice that our proof does not depend on the smoothness of $m^*$ recalled in Theorem~\ref{teorema_well_posedness}{\rm(i)},  and it can be easily adapted to deal with equations whose second-order term are not uniformly elliptic {\rm(}see e.g. \cite{ferrettisecondoordine,MR3828859}{\rm)}.
\end{remark}

\section{The scheme  for MFG} \label{sec_schemeMFG} 
To derive a high-order scheme that approximates a solution $(v^*,m^*)$ of the MFG system, we are left to derive a high-order method for the HJB equation, which coupled with \eqref{LG} will provide the desired discretization of system \eqref{MFG}.\\
Given $\mu \in C([0,T];\mathcal{P}_1(\RR^d))$,  we consider the HJB equation:
\be
\label{hjbmu}
\tag{{\bf HJB}}
\ba{rcl} -\partial_{t} v   - \frac{\sigma^2}{2} \Delta v +\half |\nabla v |^{2}   & =& F(x, \mu(t))  \hspace{0.4cm}  \hbox{in }   (0,T)\times \RR^d,  \\[6pt]	
										v( T,\cdot) & =& G(\cdot,\mu(T)) \hspace{0.3cm}  \hbox{in }  \RR^{d}.\ea
\ee
Standard  results for quasilinear parabolic equations (see e.g.  \cite[Chapter IV and V]{Lady67})  yield that \eqref{hjbmu} admits a unique classical solution $v[\mu]$. Moreover,  using that $v[\mu]$ is the value function associated with a stochastic optimal control problem (see e.g. \cite[Chapters IV and V]{MR2179357}), it is easy to check that {\bf(H1)-\bf(H2)}  imply the existence of $R>0$ such that 
$$|\nabla v[\mu](t,x)|\leq  R \quad \text{for all } t\in [0,T], \, x\in \RR^d, \, \mu \in C([0,T]; \P_1(\RR^d). $$ 

We now describe  a variation of the scheme in \cite{BCCF21} to deal with the nonlinearity of the Hamiltonian in \eqref{hjbmu} with respect to $\nabla v$ (see also \cite{milstein:2000,PicarelliReis2020} for related constructions).   For a  given $ \mu \in C([0,T];\mathcal{P}_1(\RR^d))$,  let us define  $\{v_{k,i} \, | \, k\in \I_{\Delta t}, \, i\in \I_{\Delta x}\} \subset \RR $ as the solution to 
\begin{equation}
\label{scheme-control}
\ba{rcl}
  v_{k,i}&=&S[\mu](v_{\cdot,k+1},k,i) \quad \text{for all } k\in \I^*_{\Delta t}, \, i\in \I_{\Delta x},\\[6pt]
   v_{N_{\Delta t},i}&=& G(x_{i}, \mu(t_{N_{\Delta t}})) \quad   \text{for all }   i\in \I_{\Delta x}, \\
\ea
\end{equation}
where, for a given  $f=\{f_i\}_{i\in \I_{\Dx}}\subset \RR$, $k\in \I_{\Delta t}^*$, and $i\in \I_{\Delta x}$,  
\begin{equation}
\label{SLscheme}
\begin{split}
S[\mu](f,k,i)=\inf_{\alpha \in A} \left[\sum_{\ell\in \I_{d}} \ \omega_{\ell} \left(I[f](x_i -\Delta t \alpha+\sqrt{\Delta t}\sigma e^\ell) +  \frac{\Delta t}{2} F(x_i -\Delta t \alpha+\sqrt{\Delta t}\sigma e^\ell, \mu(t_{k+1}))\right) \right. \\
\left. + \frac{\Delta t}{2} |\alpha|^{2} \right] + \frac{\Delta t}{2} F(x_{i}, \mu(t_{k})),
\end{split}
\end{equation}
with $A= \{\alpha \in \RR^d \, | \, |\alpha| \leq R\}$ and $I[f]$ being defined by \eqref{definizione_I}.  The following consistency result for $S[\mu]$ follows from \eqref{SLscheme} and {\bf (H2)}. 
\begin{proposition}
\label{propiedadesbasicasesquema}   
Let $(\Delta t_n, \Delta x_n)_{n\in \NN}\subset (0,+\infty)^2$, $(k_n)_{n\in \NN}\subseteq \NN$, $(i_n)_{n\in \NN}\subset \ZZ^d$,   $(\mu_n)_{n\in \NN}\subset C([0,T];\P_1(\RR^d))$, and $\mu \in C([0,T];\P_1(\RR^d))$. Assume that {\bf(H2)} holds and, as $n\to \infty$,  $ (\Delta t_n, \Delta x_n) \to(0,0)$,  ${(\Delta x}_{n})^{q+1}/{\Delta t}_n \to 0$, $k_n \in \I_{\Delta t_n}$, $i_n\in \I_{\Delta x_n}$,  $t_{k_n}\to t$, $x_{i_n} \to x$,  and $\mu_n\to \mu$. Then,  for every 
$\phi \in  C_{b}^{1,3}\left([0,T] \times \RR^{d}\right)$   satisfying $\|\nabla \phi \|_{L^\infty([0,T]\times \RR^d)}\leq R$, we have
$$\label{Consistenzadebole}
\underset{n\to \infty} \lim \frac{1}{{\Delta t}_n} \left[\phi(t_{k_n},x_{i_n})-S[\mu_n](\phi_{k_{n}+1},k_n,i_n)\right]  =-\partial_t \phi(t,x)-\frac{\sigma^2}{2}\Delta \phi(t,x) +\frac{1}{2}|\nabla\phi(t,x)|^2-F(x,\mu(t)),
$$
where $\phi_{k}=\{\phi(t_k,x_i)\}_{i\in\I_{\Dx}}$.
\end{proposition}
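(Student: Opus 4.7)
The plan is to perform a Taylor expansion of $\phi(t_{k_n+1},\cdot)$ inside the definition of $S[\mu_n]$, exploiting the fact that the discrete random vectors $\{e^\ell\}_{\ell\in\I_d}$ match a standard normal up to third-order moments. First I would use the interpolation estimate \eqref{approximation_error_V_Deltax_q} to replace $I[\phi_{k_n+1}](y)$ by $\phi(t_{k_n+1},y)$ at a uniform cost of order $(\Delta x_n)^{q+1}$; after division by $\Delta t_n$ this contributes $o(1)$ by the scaling hypothesis $(\Delta x_n)^{q+1}/\Delta t_n \to 0$, and the bound is uniform in $\alpha \in A$ since $A$ is bounded.

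Next, using $\phi \in C^{1,3}_b$, I would Taylor expand $\phi(t_{k_n+1}, x_{i_n}+h_\ell^\alpha)$ around $x_{i_n}$ up to order three in space, with increment $h_\ell^\alpha := -\Delta t_n\,\alpha + \sqrt{\Delta t_n}\,\sigma e^\ell$. The key algebraic step is the moment computation for the distribution \eqref{rv}: its components are i.i.d.\ with mean zero, unit variance, and vanishing third moment, whence
$$\sum_{\ell\in\I_d}\omega_\ell e^\ell=0,\qquad \sum_{\ell\in\I_d}\omega_\ell e^\ell (e^\ell)^\top = I_d,\qquad \sum_{\ell\in\I_d}\omega_\ell e^\ell_{i}e^\ell_{j}e^\ell_{k}=0.$$
Collecting the surviving terms and observing that the Taylor remainder is $o(|h_\ell^\alpha|^3)=o((\Delta t_n)^{3/2})$ uniformly in $\alpha\in A$, the weighted sum collapses to
$$\sum_{\ell\in\I_d}\omega_\ell \phi(t_{k_n+1}, x_{i_n}+h_\ell^\alpha) = \phi(t_{k_n+1}, x_{i_n}) - \Delta t_n\,\alpha\cdot \nabla\phi(t_{k_n+1}, x_{i_n}) + \frac{\sigma^2 \Delta t_n}{2}\Delta\phi(t_{k_n+1}, x_{i_n}) + o(\Delta t_n).$$
A cruder expansion, based on the Lipschitz continuity of $F$ in $x$ (from {\bf (H2)}) together with the convergence of $x_{i_n}$ and $\mu_n$, handles the two $F$-terms, which jointly contribute $\Delta t_n F(x,\mu(t)) + o(\Delta t_n)$.

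Substituting everything into $S[\mu_n](\phi_{k_n+1},k_n,i_n)$ and dividing by $\Delta t_n$ leaves only the finite-dimensional optimization
$$\inf_{\alpha\in A}\left[\frac{|\alpha|^2}{2} - \alpha\cdot \nabla\phi(t_{k_n+1}, x_{i_n})\right],$$
whose unconstrained minimizer $\alpha^\star = \nabla\phi(t_{k_n+1}, x_{i_n})$ lies in $A$ thanks to the standing assumption $\|\nabla\phi\|_{L^\infty}\leq R$, yielding the value $-\tfrac{1}{2}|\nabla\phi(t_{k_n+1}, x_{i_n})|^2$. Combining this with the first-order time expansion $\phi(t_{k_n+1}, x_{i_n}) - \phi(t_{k_n}, x_{i_n}) = \Delta t_n\,\partial_t\phi(t_{k_n}, x_{i_n}) + o(\Delta t_n)$ and passing to the limit using $(t_{k_n}, x_{i_n})\to(t,x)$, $\mu_n\to\mu$, and the continuity of $\partial_t\phi$, $\nabla\phi$, $D^2\phi$, $F$ delivers the stated identity.

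The main technical point I expect to be slightly delicate is the uniformity in $\alpha$ of all Taylor remainders, which is needed to interchange the infimum with the $o(\Delta t_n)$ error; this, however, follows easily from the compactness of $A$ and the boundedness of the relevant derivatives of $\phi$. The rest is a straightforward bookkeeping exercise of the type carried out for consistency of semi-Lagrangian schemes in \cite{BCCF21}.
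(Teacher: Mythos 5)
Your proposal is correct and follows essentially the same route as the paper's proof: the paper likewise reduces to the expansion \eqref{eq:stimaCNoperatore} applied with $b\equiv -\alpha$ (which is exactly your explicit moment/Taylor computation for the law \eqref{rv}), controls the interpolation error via \eqref{approximation_error_V_Deltax_q} and the scaling $(\Delta x_n)^{q+1}/\Delta t_n \to 0$, handles the $F$-terms through {\bf(H2)}, and evaluates the infimum in closed form using $\|\nabla\phi\|_{L^\infty}\leq R$, with all remainders uniform in $\alpha\in A$. The only difference is presentational: you carry out the moment cancellations by hand where the paper cites its Crank--Nicolson consistency estimate.
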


The proof is given in the Appendix.\\

For $\mu \in C([0,T];\mathcal{P}_1(\RR^d))$, let us define
\be\label{v_extension} v_{\Delta }[\mu](t,x):=I [v_{\left[ t/\Dt \right]} ](x) \hspace{0.5cm} \mbox{for all } \hspace{0.2cm} (t,x) \in [0,T] \times \OO_{\Delta},
\ee
where $v_{k,i}$ is given by \eqref{scheme-control}.
For $\ell \in\I_d$ and $k \in \I^*_{\Delta t}$, let $y_{k}^\ell[\mu](x)$ be the unique solution to
\be\label{discretecaract_eps} 
y=x- \frac{\Delta t}{2} \left[Dv_{\Delta}[\mu](t_k,x)+Dv_{\Delta}[\mu](t_{k+1},x), y)\right]+\sqrt{\Delta t}\sigma e^\ell,
\ee
where $Dv_{\Delta}[\mu](t_k,x)$ represents a numerical gradient with respect to $x$ of $v_{\Delta}[\mu](t_k,x)$, computed by a fourth-order finite difference approximation.\\

We propose the following scheme for \eqref{MFG}: find $\{(v_{k,i},m_{k,i})\in \RR^2 \, | \, k \in \I_{\Delta t}, \; i\in \I_{\Delta x}\}$ such that, for all $k\in \I_{\Delta t}^*$ and $i\in \I_{\Delta x}$,
\be
\label{weak_scheme_MFG} 
 \ba{rrl}
\ds v_{k,i}&=& \ds S_{\Delta}[m_{\Delta}](v_{k+1},k,i),\\[6pt]
\ds v_{N_{\Delta t},i}&=& \ds G(x_{i},m),\\[6pt]
\ds \sum_{j\in \I_{\Delta x}} m_{k+1,j}\int_{\OO_{\Delta} }\beta_{i}(x)\beta_j(x) \dd x&=& \ds \sum_{j\in \I_{\Delta x}} m_{k,j}\sum_{\ell\in \I_{d}}\omega_\ell \int_{\OO_{\Delta} } \beta_i ( y_{k}^{\ell}[m_{\Delta}](x)) \beta_j(x)\dd x, \\[15pt]
\ds \sum_{j \in \I_{\Delta x}} m_{0,j} \int_{\OO_{\Delta}} \beta_{i}(x) \beta_{j}(x) \dd x &=&\ds \int_{\OO_{\Delta}} m_0^*(x)\beta_{i}(x) \dd x.
\ea\ee
In the examples considered in the following section, system~\eqref{weak_scheme_MFG} will be solved, heuristically and as in  \cite{MR3148086,MR4253925,Jakobsen_et_al_2021}, by using fixed-point iterations.

\section{Numerical results}
\label{sec:numerics}
In this section, we show the performance of the proposed scheme on two different problems: a MFG with non-local couplings and an explicit solution, and a MFG with local couplings and no explicit solutions. For each test, we measure the accuracy of the scheme by computing the following relative errors in the discrete uniform and $L^2$ norms
$$
E_\infty = \frac{\max_{i \in \I_{\Delta x}} | h_{\Delta}(T,x_i) - h(T,x_i)|}{\max_{i \in \I_{\Delta x}} | h(T,x_i) |}, \quad E_2 = \left(\frac{\mbox{Int}_{\OO_{\Delta}} (| h_{\Delta}(T,x) - h(T,x)|^2 )}{\mbox{Int}_{\OO_{\Delta}}(| h(T,x) |^2 )}\right)^{1/2},
$$
where $h=m,\, v$, $h_\Delta = m_\Delta, \, v_\Delta$, and $\mbox{Int}_{\OO_{\Delta}}$ denotes the approximation of the Riemann integral on $\OO_{\Delta}$ by using the Simpson's Rule. We denote by $p_\infty$ and $p_2$ the rates of convergence for $E_{\infty}$ and $E_2$, respectively.

Notice that, for the exactly integrated scheme \eqref{LG}, the local truncation error is given by the contributions of \eqref{stimaCN} and \eqref{approximation_error_V_Deltax_q}, which yield a global truncation error of order $(\Dx)^{q+1}/\Dt + (\Dt)^2$. As in \cite{ferrettisecondoordine}, we get that the order of consistency is maximized by taking $\Delta t = O((\Delta x)^{(q+1)/3})$. With respect to the space discretization step, the previous choice suggests an order of convergence given by $2(q+1)/3$. In all the simulations we take $q=3$, which yields an heuristic optimal rate equal to $8/3$, and Simpson's Rule to approximate the integrals in \eqref{LG}. The convergence rate of the resulting scheme is illustrated numerically in the examples below. Indeed, the tables in the tests show rates of convergence $p_\infty$ and $p_2$ greater than $2$ in most of the cases.
The positivity preservation of the discrete density is true only when linear basis functions are used. This property is not in general verified by our method, and it holds only asymptotically.
In the first numerical test, we calculate the maximum value of the negative part of the approximate density in the space-time mesh and call this value positivity error. We will show that with the refinement of the mesh, the positivity error decreases until it cancels.
\subsection{ An implementable version of the scheme \eqref{LG}}
\label{fullyd_1d}
In order to obtain an implementable version of \eqref{LG_matrix}, an approximation of the integrals therein has to be introduced. For simplicity, we consider the one-dimensional case, we use Simpson's Rule on each element
$[x_j,x_j+2\Dx]$ ($j=2m$, $m\in \mathbb{Z}$) and cubic symmetric Lagrange interpolation basis functions $\beta_j$ ($p=1$ in \eqref{eq:referencebasis}). Recalling that $\beta_{j}$ has support in $[x_{j-2},x_{j+2}]$, letting $\delta_{i,j}=1$ if $i=j$ and $\delta_{i,j}=0$ otherwise, the entries of the mass matrix $A$ (see \eqref{def_mass_matrix}) are approximated by
\be
\label{approx_int_1}
\int_{\OO_{\Delta}} \beta_{i}(x) \beta_{j}(x) \dd x=
\int_{x_{j-2}}^{x_{j}} \beta_{i}(x) \beta_{j}(x) \dd x+\int_{x_j}^{x_{j+2}} \beta_{i}(x) \beta_{j}(x) \dd x\simeq
\frac{2 \Dx}{3} \delta_{i,j},
\ee
while the entries of $B_{k}^{\ell}$ (see \eqref{matrice_B_K})  are approximated by
\be
\label{approx_int_2}
(B_{k}^\ell)_{i,j}=\int_{x_{j-2}}^{x_{j+2}} \beta_{ i}(y^\ell_k(x))\beta_j(x)\dd x\simeq \frac{2 \Dx}{3}  \beta_{ i}(y^\ell_k(x_j)).
\ee
We observe that, as usual in LG methods, the integrands in \eqref{approx_int_1} and \eqref{approx_int_2} have not the necessary regularity in order to guarantee the standard accuracy order of the quadrature rule.
This can lead to fluctuations in the order of convergence, as can be observed in some instances of the numerical tests below. However, in those tests we will see that the aforementioned quadrature rule provides an overall order of convergence  close to $8/3$.

Using \eqref{approx_int_1} and \eqref{approx_int_2}, the scheme \eqref{LG_matrix}  is approximated by  
\be
\label{LGdiscreto}
\ba{rcl}
m_{k+1}&=&\sum_{\ell\in \I_{d}} \omega_\ell \widetilde B_k^ \ell m_{k} \quad \text{for }k \in \I^*_{\Delta t},\\[6pt]
m_0&=& \widetilde{m}_0,
\ea
\ee
where $\widetilde B_k^\ell$  is a $(2N_{\Dx}+1)\times (2N_{\Dx}+1)$ matrix with entries given by 
$$
(\widetilde{B}_{k}^\ell)_{i,j}=\beta_{ i}(y^\ell_k(x_j))
$$
and $\widetilde{m}_0$ is vector of length $2N_{\Dx}+1$ given by 
$$
\widetilde{m}_{0,i}= m_{0}^*(x_i) \quad \text{for } i \in \I_{\Delta x}.
$$
\begin{remark} Applied to a linearization of equation  \eqref{hjbmu}, scheme \eqref{LGdiscreto} is the dual of the semi-Lagrangian scheme \cite{ferrettisecondoordine} when a Crank-Nicolson method is used to discretize the characteristic curves, together with a cubic symmetric Lagrange interpolation to reconstruct the values in the space variable. Moreover,  scheme \eqref{LGdiscreto} is also a natural higher-order extension of the scheme proposed in \cite{CS15,MR3828859} to approximate second-order MFGs. 
\end{remark}

\subsection{Fixed-point iterations}\label{sect:alg}
In view of~\eqref{LGdiscreto}, it is natular to propose the following implementable version of the scheme for \eqref{MFG}:
 find $(\mathcal V,\mathcal M):=\{(\overline v_{k,i},\overline m_{k,i})\in \RR^2 \, | \,  k \in \I_{\Delta t}, \; i\in  \I_{\Delta x}\}$ such that, for all $k\in \I_{\Delta t}^*$ and $i\in \I_{\Delta x}$,
\begin{align}
 v_{k,i}&= \ds S_{\Delta}[m_{\Delta}](v_{k+1},k,i), \label{scheme_HJ}\\
 v_{N_{\Delta t},i}&= \ds G(x_{i},m), \nonumber \\
 m_{k+1,i}&=\sum_{j \in  \I_{\Delta x}}m_{k,j} \sum_{\ell\in \I_{d}} \omega_\ell \beta_{ i}(y^\ell_k(x_j)),\label{scheme_FP} \\
 m_{0,i}&= m_{0}^*(x_i)\nonumber.
\end{align}
This system is heuristically solved by the fixed-point iterations described in {\bf{Algorithm 1}}, which has as input data a damping (or relaxation) parameter $\omega \in [0,1]$, an initial guess ${\mathcal M}_\circ$ for the density $\mathcal M$, and a tolerance parameter $\tau>0$. 
{\color{black}
The iterations are stopped as soon as the  $L^1$-norm, approximated by the Simpson's Rule,  of the difference between two consecutive approximations of $\mathcal M$ is less than $\tau$.

\begin{algorithm}
    \SetKwFunction{isOddNumber}{isOddNumber}
    \SetKwInOut{KwIn}{Input}
    \SetKwInOut{KwOut}{Output}
    \SetKwRepeat{Do}{do}{while}
    \KwIn{Initial guess ${{\mathcal M}_\circ}$, damping parameter $\omega$, 
     tolerance $\tau>0$.}
    \KwOut{Approximation $(\mathcal V,\mathcal M)$ of the solution to  \eqref{scheme_HJ}-\eqref{scheme_FP}.}

        Initialize $\widetilde {\mathcal M}^{(0)}= {\mathcal M}^{(0)}={\mathcal M}_\circ$, $p=0$,
        
   \Do{ $E^{(p+1)}>\tau$}
     {
    compute ${\mathcal V}^{(p+1)}$ solution to \eqref{scheme_HJ} with $m_{\Delta}$ replaced by ${\widetilde {\mathcal M}^{(p)}}$,\\
    compute $D v_{\Delta}^{(p+1)}$ numerical gradient of ${\mathcal V}^{(p+1)}$,\\
   compute   ${\mathcal M}^{(p+1)}$ solution to \eqref{scheme_FP}   with $y^\ell_k$ solution to \eqref{discretecaract_eps}
   obtained using   $D v_{\Delta}^{(p+1)}$,\\
   compute $E^{(p+1)}= \mbox{Int}_{[0,T]\times \OO_{\Delta}}|{\mathcal M}^{(p+1)}-{\mathcal M}^{(p)}|$,\\
  let $\widetilde {\mathcal M}^{(p+1)}=\omega {\widetilde {\mathcal M}^{({p})}}+(1-\omega){ {\mathcal M}^{(p+1)}}$, \\
   set $p=p+1$,\\
   }

     \Return  $ ({\mathcal V}^{(p+1)}, {\mathcal M}^{(p+1)}). $
 
    \caption{Fixed-point iterations}\label{alg}
\end{algorithm}
}

\subsection{Non-local MFG with analytical solution}
\label{non_local_test}
Consider the non-local MFG system 
\be\label{test2}
\ba{rcl}
-\partial _t v - \frac{\sigma^2}{2}\Delta v + \half \left| \nabla v  \right|^2 &=& \ds \half \left( x - \int_{\RR^d}y m(t,y) \mathrm{d}y\right)^2 
\quad  \text{in } [0,T) \times \RR^d, \\[10pt]
\partial_t m  - \frac{\sigma^2}{2} \Delta m  - \divergence\left(\nabla v m\right) &=&0 \quad  \text{in }(0,T] \times \RR^d, \\[5pt]
v(T,\cdot) = 0, \; &\;& \;  m(0,\cdot) =m_0^*    \quad  \text{in } \RR^d,
\ea
\ee
where $m_0^*$ is the density of a Gaussian random vector with mean $\mu_0\in \RR^d$ and covariance matrix $\Sigma_0 \in \RR^{d\times d}$. For simplicity, we will assume that  $\Sigma_0$ is a diagonal matrix.

In what follows, we  compute explicitly the unique solution $(v^*,m^*)$ to \eqref{test2} (see e.g. \cite{MR3489817}).  Since $v^*$ is the value function associated with a linear-quadratic optimal control problem, standard results (see e.g. \cite[Chapter 6]{MR1696772}) show that $v^*$ has the form 
\be
\label{v_star_esempio}
v^*(t,x)= \half \langle \Pi(t)x, x \rangle +\langle s(t),x\rangle + c(t)\quad \text{for $(t,x)\in [0,T]\times \RR^d$},
\ee
where, setting $\ov Y(t) = \int_{\RR^d} y m^*(t,y) \mathrm{d}y$ for all $t\in [0,T]$,  $\Pi$, $s$, and $c$ satisfy
\be
\label{eq:exactsol}
\begin{array}{ll}
- \dot \Pi(t)  = - \Pi^{2}(t) + I_{d} &\quad \text{for $t\in(0,T)$},\\[4pt]
-\dot s(t)  \hspace{0.1cm}= - \Pi (t)s(t)- \ov Y(t)&\quad \text{for $t\in(0,T)$}, \\[4pt]
-\dot c(t)  \hspace{0.1cm}= \frac{\sigma^2}{2}\text{Tr}(\Pi(t)) -\half |s(t)|^2+ \half\left| \ov Y(t)\right|^2 & \quad \text{for $t\in(0,T)$}, \\[4pt]
\Pi(T) \hspace{0.1cm}= 0, \quad
s(T) = 0, \quad
c(T) = 0.
\end{array}
\ee
Notice that   $\Pi$  satisfies a Riccati equation whose analytical solution is given by
\be
\label{sol_pi}
\Pi (t) = \left(\frac{e^{2T-t} - e^t}{e^{2T-t} + e^t}\right)  I_{d}\quad \text{for $t\in [0,T]$.}
\ee
Since  $\nabla v^*(t,x)=  \Pi(t) x + s(t)$, the SDE underlying the FP equation in \eqref{test2} (see \eqref{SDE_underlying_FP}) is given by
$$
\ba{rcl}
\mathrm{d}Y(t) &=& \left[-\Pi(t) Y(t) - s(t)\right] \dd  t + \sigma \mathrm{d}W(t) \quad \text{for $t\in (0,T)$}, \\[6pt]
Y(0)&=& Y_0,
\ea
$$
where $Y_0$ is a  Gaussian random variable,  independent of the $d$-dimensional Brownian motion $W$, with mean $\mu_0$ and covariance matrix $\Sigma_0$.  Since
\be
\label{sde_Y}
Y(t) = Y_0 + \int_0^t \left[-\Pi(r) Y(r) - s(r)\right] \dd r +\sigma W(t) \quad \text{for $t\in (0,T)$},
\ee
and the coordinates $Y_{0,i}$ ($i=1,\hdots,d$) of $Y_0$ are independent Gaussian random variables with means $\mu_{0,i}$ and variance $(\Sigma_0)_{i,i}$, 
for every $t\in [0,T]$, $Y(t)$ is a vector of independent Gaussian random variables $Y_{i}(t)$ ($i=1,\hdots,d$)  with mean $\ov{Y}_{i}(t)$ and variance  $(\Sigma(t))_{i,i}=\EE\left(Y_{i}^2(t)\right) -\ov{Y}^2_{i}(t)$ to be determined.   In other words, 
\be
\label{m_star_prodotto}
m^*(t,x)= \Pi_{i=1}^{d}m^*_{i}(t,x_i)  \quad \text{for } t\in [0,T], \; x\in \RR^d,
\ee
where, for every $t\in [0,T]$ and  $i=1, \hdots, d$, $m_{i}^{*}(t,\cdot)$ is a univariate Gaussian density with parameters $\ov{Y}_{i}(t)$ and variance  $(\Sigma(t))_{i,i}$  In order to compute these parameters, notice that \eqref{sde_Y} implies that 
$$
\ov Y (t)=  \mu_0  + \int_0^t \left(-\Pi(r)\ov Y(r) - s(r)\right)\mathrm{d}r \quad \text{for $t\in [0,T]$},
$$
i.e. $\ov Y$ solves 
\be
\label{eq:barx_wr}
\ba{rcl}
\dot {\ov Y} (t) &=& -\Pi(t) \ov Y(t) - s(t)  \quad  \text{for } t\in(0,T), \\[6pt]
\ov Y (0) &=& \mu_0.
\ea
\ee
Thus, by \eqref{eq:exactsol} and \eqref{eq:barx_wr}, the couple $(\ov Y, s)$ solves the boundary value problem
$$
\ba{ll}
\dot {\ov Y}(t)\hspace{0.025cm}= -\Pi (t)\ov Y(t)- s(t)\quad &\text{for } t\in(0,T), \\[6pt]
\dot s (t) \hspace{0.15cm}= \Pi (t)s(t)+ \ov Y(t) \quad &\text{for } t\in(0,T), \\[6pt]
\ov Y (0) = \mu_0 ,\quad &
s(T) = 0,
\ea
$$
whose unique solution is given by (see e.g. \cite{heath2005scientific})
\be
\label{Y_e_s}
\ov Y(t) = \mu_0,\quad
s(t) = - \Pi(t) \mu_0 \quad \text{for } t\in [0,T],
\ee
where we recall that $\Pi$ is given by \eqref{sol_pi}.

On the other hand, by \eqref{sde_Y} and It\^o's lemma, for every $i=1, \hdots,d$, we have
$$
 Y_{i}^2(t) = Y^2_{0,i} - \int_0^t 2 Y_{i}(r)\left[\Pi_{i,i}(r)Y_i(r) + s_i(r)\right]\mathrm{d}r + 2\sigma\int_0^t Y_i(r) \mathrm{d}W_i(r) + \sigma^2t \quad \text{for } t\in [0,T].
$$
Thus, denoting by $m_{0,i}^*$ the $i$ the marginal of $m_{0}^*$ ($i=1,\hdots,d$), \eqref{Y_e_s} yields
$$
 \EE \left( Y_i^2(t)\right) = \int_{\RR} x^2 m_{0,i}^*\left(x\right)\dd x - 2\int_0^t \left[\Pi_{i,i}(r) \EE \left( Y_{i}^2(r)\right)+ \mu_{0,i}s_i(r)\right]\mathrm{d}r + \sigma^2t \quad \text{for } t\in [0,T].
$$
In particular, $[0,T]\ni t \mapsto \EE \left( Y_i^2(t)\right) \in \RR $ is the unique solution to
$$
\ba{rcl}
\dot M(t) &=& -2\Pi_{i,i}(t)M(t) - 2 \mu_{0,i} s_i(t) + \sigma^2 \quad t\in(0,T),\\[6pt]
 M(0) &=& \int_{\RR} x^2 m_{0,i}^*\left(x\right)\mathrm{d} x,
\ea
$$
which, for all $t\in [0,T]$, is given by
 \be
\EE \left( Y_{i}^2(t)\right) = \ds \left(e^{2T - t} + e^t\right)^2\left[ \ds \frac{2 \int_{\RR} x^2 m_{0,i}^*\left(x\right)\mathrm{d}x - 2 \mu_{0,i}^2 + \sigma^2 \left(e^{2T} + 1\right)}{2(e^{2T} + 1)^2} - \frac{\sigma^2}{2(e^{2T} + e^{2t})} \right] + \mu_{0,i}^2.
\ee
Thus, for all $i=1,\hdots,d$ and $t\in [0,T]$,
\be
\label{sigma_t}
(\Sigma(t))_{i,i}= \left(e^{2T - t} + e^t\right)^2\left[ \frac{2 \int_{\RR} x^2 m_{0,i}^*\left(x\right)\dd x - 2 \mu_{0,i}^2 + \sigma^2 \left(e^{2T} + 1\right)}{2(e^{2T} + 1)^2} - \frac{\sigma^2}{2(e^{2T} + e^{2t})} \right].
\ee
Altogether, for all $t\in [0,T]$, $m^*(t,\cdot)$ is given by \eqref{m_star_prodotto}, where the parameters of the univariate Gaussian densities $m_{i}^*(t,\cdot)$ are given by \eqref{Y_e_s} and \eqref{sigma_t}, and the value function $v^*$ is given by \eqref{v_star_esempio}, with $\Pi$ and $s$ given by \eqref{sol_pi} and \eqref{Y_e_s}, respectively, and $c$, obtained by integrating the third equation of \eqref{eq:exactsol}, is given by
$$
c(t) = \half \left\langle \Pi(t)\mu_0,\mu_0\right\rangle - \frac{\sigma^2d}{2} \ln \left(\frac{2e^T}{e^{2T-t} + e^t}\right) \quad \text{for } t\in [0,T].
$$
In this test, the assumption on the boundedness of $b$ is not verified in all $\mathbb{R}^d$, however it is true in every bounded domain $\OO_{\Delta}$.
Let us now solve system \eqref{test2} on a bounded domain in dimension $d=1$. We choose $[0,T] \times \OO_\Delta = [0, 0.25] \times (-2,2)$, with Dirichlet boundary conditions on $\partial\OO_\Delta$, the latter being equal to the exact solution of \eqref{test2} for the HJB equation and homogeneous for the FP equation. The numerical approximation of the boundary conditions for the HJB equation is based on the technique proposed in \cite{BCCF21}, while for the FP equation we proceed as in the previous test.
We consider two cases, one with $\sigma^2/2 = 0.05$ and the other one with $\sigma^2/2 = 0.005$. As input parameters for the fixed-point iterations, we set the initial guess for the density equal to the initial datum $m_0^*$ at each time step, as damping parameter $\omega=0$, and as tolerance
 $\tau=10^{-9}$.
Tables \ref{table:testsolesatta1nuovaV} and \ref{table:testsolesatta1nuova} show the errors and the convergence rates for the approximation of the HJB and FP equations, in the case where $\sigma^2/2 = 0.05$ and $\Dt = (\Dx)^{4/3}/4$.
In order to show the advantages of the high-order scheme in this paper, we also solve system \eqref{test2} with the low order numerical scheme proposed in \cite{CS15}. We display in Table \ref{table:testsolesatta1nuova} (columns 6-9) the errors and convergence rates for the approximation of the FP equation. The comparison between the errors and the orders of the two schemes clearly shows the gain in accuracy
achieved by the new scheme.

Tables \ref{table:testsolesatta2nuovav} and \ref{table:testsolesatta2nuova} show the errors and the convergence rates for the approximation of the HJB and FP equations, in the case where $\sigma^2/2 = 0.005$ and $\Dt = (\Dx)^{4/3}/4$.
The convergence rates tend to be close to the theoretical optimal rate $8/3$. Tables \ref{table:testsolesattadxfratto4v} and \ref{table:testsolesattadxfratto4m} show the errors and convergence rates for $v_{\Delta}(0,\cdot)$ and $m_{\Delta}(T,\cdot)$, which are calculated by taking  $\Delta t=\Delta x/4$, and Tables \ref{table:testsolesattadxquadrov} and \ref{table:testsolesattadxquadrom} consider the case $\Delta t=(\Delta x)^2$.
These tables show that the scheme is stable as the time steps change, however the convergence rate deteriorates slightly, especially for the approximation of the time-dependent density.
 \\
 In Figure \ref{fig:testesattafig1} we show the solution to \eqref{test2} on $[0,T] \times \OO_\Delta = [0,0.25] \times (-2,2)$ with $\sigma^2/2=0.005$, computed with $\Delta x = 1.25 \cdot 10^{-2}$ and $\Delta t = (\Delta x)^{4/3}/4$. Figure \ref{fig:testesatta2zoom} displays a zoom of the initial density $m_0^*$, the exact solution $m^*(T,\cdot)$ and its approximation $m_{\Delta}(T,\cdot)$, computed with $\Delta x = 6.25 \cdot 10^{-3}$ and $\Delta t = (\Delta x)^{4/3}/4$.
\begin{table}[h!]
    \centering
\begin{tabular}{||c|c|c|c|c|} 
\hline
\multirow{2}{*}{$\Delta x$ }&  \multicolumn{4}{c|}{Errors for the approximation of $v^*(0,\cdot)$}\\
\cline{2-5}
& $E_{\infty}$ &  $E_2$ & $p_{\infty}$ &  $p_2$ \\
\hline
$2.00\cdot 10^{-1}$ &  $6.20\cdot 10^{-5}$ & $7.40\cdot 10^{-5}$  & -  &  - \\
\hline 
$1.00\cdot 10^{-1}$ &  $1.09\cdot 10^{-5}$ & $1.43\cdot 10^{-5}$  & 2.51& 2.37  \\
\hline
$5.00\cdot 10^{-2}$ & $2.13\cdot 10^{-6}$ & $3.41\cdot 10^{-6}$  & 2.36  &  2.07\\
\hline 
$2.50\cdot 10^{-2}$ &  $5.42\cdot 10^{-7}$ & $1.00\cdot 10^{-6}$  & 1.97  & 1.77\\
\hline
 \end{tabular}
\caption{ Errors and convergence rates for the approximation of the value function of problem \eqref{test2} with $d=1$, $\sigma^2/2=0.05$, and $\Delta t = (\Delta x)^{4/3}/4$.}
\label{table:testsolesatta1nuovaV}
\end{table}

\begin{table}
\begin{tabular}{||c|c|c|c|c|c|c|c|c||} 
\hline
\multirow{2}{*}{$\Delta x$ }&\multicolumn{4}{c|}{High-order scheme} 
&\multicolumn{4}{c||}{Low-order scheme}\\
\cline{2-9}
&  $E_{\infty}$ &  $E_2$ & $p_{\infty}$ &  $p_2$&  $E_{\infty}$ &  $E_2$ & $p_{\infty}$ &  $p_2$ \\
\hline
$2.00\cdot 10^{-1}$ & $2.22\cdot 10^{-2}$ & $2.32\cdot 10^{-2}$  & -  &  -  & $1.90\cdot 10^{-1}$ & $1.84\cdot 10^{-1}$& -  &  - \\
\hline 
$1.00\cdot 10^{-1}$ &  $5.43\cdot 10^{-3}$ & $5.10\cdot 10^{-3}$  & 2.03 & 2.19 &$1.56\cdot 10^{-1}$ & $1.41\cdot 10^{-1}$&0.28 &0.38\\
\hline
$5.00\cdot 10^{-2}$ & $9.32\cdot 10^{-4}$ & $8.90\cdot 10^{-4}$  & 2.54 &  2.52 &$1.14\cdot 10^{-1}$ &$1.01\cdot 10^{-1}$&0.45&0.48\\
\hline 
$2.50\cdot 10^{-2}$ &  $1.33\cdot 10^{-4}$ & $1.26\cdot 10^{-4}$  & 2.81 &  2.82 
&$7.77\cdot 10^{-2}$&$6.86\cdot 10^{-2}$&0.55&0.56 \\
\hline
 \end{tabular}
\caption{Errors and convergence rates for the approximation of the density of problem \eqref{test2} with $d=1$, $\sigma^2/2=0.05$, and $\Delta t = (\Delta x)^{4/3}/4$.} 
\label{table:testsolesatta1nuova}
\end{table}
\begin{table}[h!]
  \centering
\begin{tabular}{||c|c|c|c|c||} 
\hline
\multirow{2}{*}{$\Delta x$ }&  \multicolumn{4}{c||}{Errors for the approximation of $v^*(0,\cdot)$}\\
\cline{2-5}
& $E_{\infty}$ &  $E_2$ & $p_{\infty}$ &  $p_2$ \\
\hline
$2.00\cdot 10^{-1}$ &  $1.68\cdot 10^{-4}$ & $1.70\cdot 10^{-4}$  & -  &  - \\
\hline 
$1.00\cdot 10^{-1}$ &  $3.56\cdot 10^{-5}$ & $3.48\cdot 10^{-5}$  & 2.24& 2.29\\
\hline
$5.00\cdot 10^{-2}$ & $5.86\cdot 10^{-6}$ & $5.75\cdot 10^{-6}$  & 2.60  &  2.60 \\
\hline 
$2.50\cdot 10^{-2}$ &  $1.06\cdot 10^{-6}$ & $1.04\cdot 10^{-6}$  & 2.47  & 2.47 \\
\hline 
 \end{tabular}
\caption{Errors and convergence rates for the approximation of  the value function of problem \eqref{test2} with $d=1$, $\sigma^2/2=0.005$, and $\Delta t = (\Delta x)^{4/3}/4$.}
\label{table:testsolesatta2nuovav}
\end{table}
\begin{table}[h!]
 \centering
\begin{tabular}{||c|c|c|c|c|c||} 
\hline
\multirow{2}{*}{$\Delta x$ }& \multicolumn{5}{c||}{Errors for the approximation of $m^*(T,\cdot)$}  \\
\cline{2-6}
& $E_{\infty}$ &  $E_2$ & $p_{\infty}$ &  $p_2$ & positivity error\\
\hline
$2.00\cdot 10^{-1}$ & $8.81\cdot 10^{-3}$ & $1.01\cdot 10^{-2}$  & -  &  - & $-3.51\cdot 10^{-4}$\\
\hline 
$1.00\cdot 10^{-1}$  & $3.06\cdot 10^{-3}$ & $2.53\cdot 10^{-3}$  & 1.53 & 2.00 & $-9.45 \cdot 10^{-9}$\\
\hline
$5.00\cdot 10^{-2}$  & $8.01\cdot 10^{-4}$ & $5.56\cdot 10^{-4}$  & 1.93 &  2.19 & $0$\\
\hline 
$2.50\cdot 10^{-2}$  & $1.81\cdot 10^{-4}$ & $1.14\cdot 10^{-4}$  & 2.15 &  2.29 & $0$\\
\hline
\end{tabular}
\caption{Errors and convergence rates for the approximation of  the density of problem \eqref{test2} with $d=1$, $\sigma^2/2=0.005$, and $\Delta t = (\Delta x)^{4/3}/4$.}
\label{table:testsolesatta2nuova}
\end{table}
\begin{figure}[h!]
\centering
     \begin{subfigure}[b]{0.45\textwidth}
        \centering
        \includegraphics[width=\textwidth]{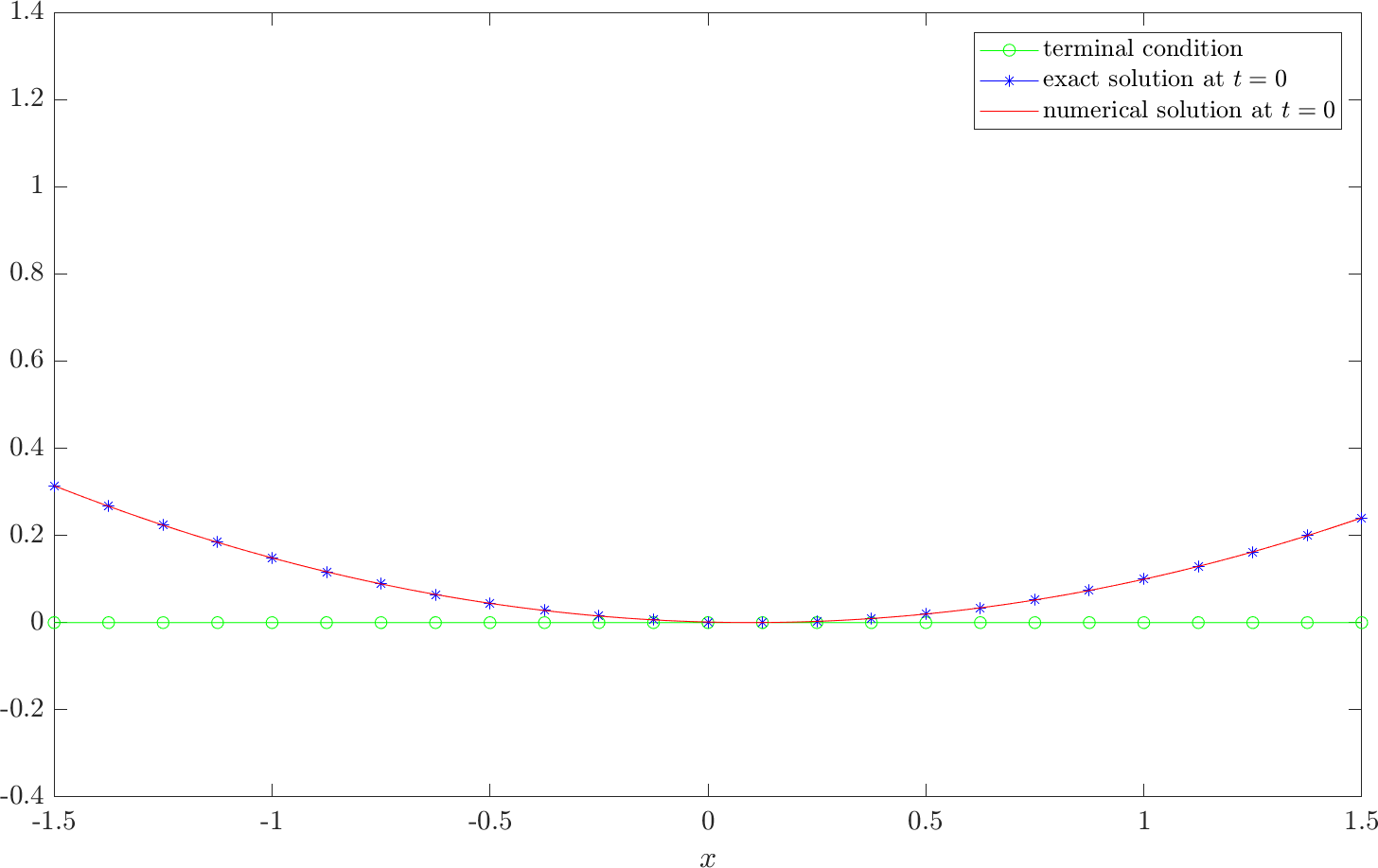}

     \end{subfigure} \quad \quad
     \begin{subfigure}[b]{0.45\textwidth}
         \centering
         \includegraphics[width=\textwidth]{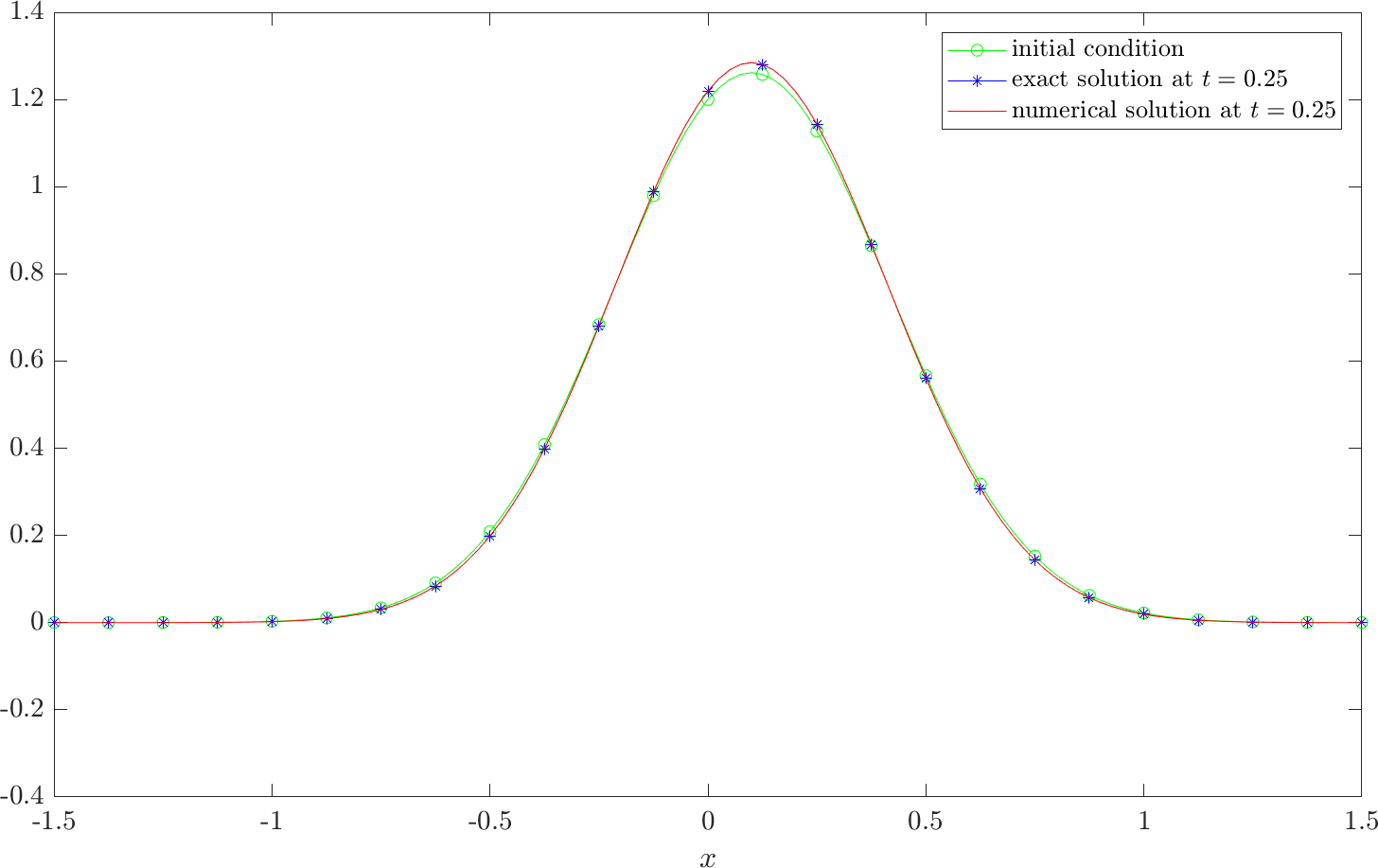}
     \end{subfigure}

	\caption{Solution to \eqref{test2} on $[0,T] \times \OO_\Delta = [0,0.25] \times (-2,2)$ with $\sigma^2/2=0.005$.  On the left, we display the exact value function $v^*$ at times $t=0$, $t=0.25$, and the numerical approximation $v_{\Delta}$ at time $t=0$. On the right, we display the exact density $m^*$ at times $t=0$, $t=0.25$, and   the numerical approximation $m_{\Delta}$ at time $t=0.25$.
	}\label{fig:testesattafig1}
\end{figure}
\begin{figure}[h!]
\includegraphics[width=0.4\textwidth]{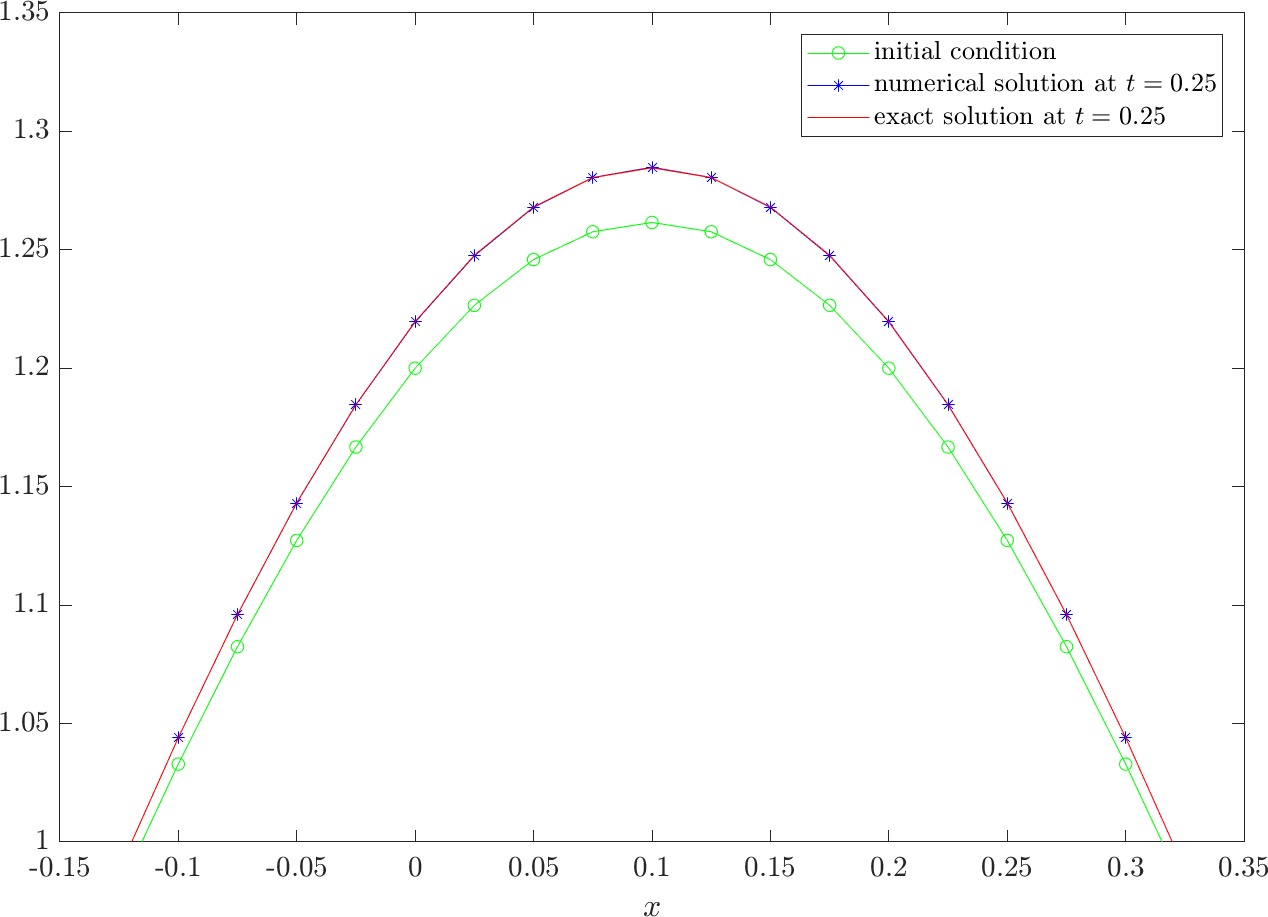} 
\caption{Zoom of the exact density $m^*$ at times $t=0$, $t=0.25$, and of the numerical approximation $m_{\Delta}$ at time $t=0.25$.}
\label{fig:testesatta2zoom}\end{figure}

\begin{table}[h!]
  \centering
\begin{tabular}{||c|c|c|c|c||} 
\hline
\multirow{2}{*}{$\Delta x$ }&  \multicolumn{4}{c||}{Errors for the approximation of $v^*(0,\cdot)$}\\
\cline{2-5}
& $E_{\infty}$ &  $E_2$ & $p_{\infty}$ &  $p_2$ \\
\hline
$2.00\cdot 10^{-1}$ &  $2.72\cdot 10^{-4}$ & $2.56\cdot 10^{-4}$  & -  &  - \\
\hline 
$1.00\cdot 10^{-1}$ &  $7.62\cdot 10^{-5}$ & $6.72\cdot 10^{-5}$  & 1.84& 1.93\\
\hline
$5.00\cdot 10^{-2}$ & $1.61\cdot 10^{-5}$ & $1.44\cdot 10^{-5}$  & 2.24 &  2.22 \\
\hline 
$2.50\cdot 10^{-2}$ &  $3.69\cdot 10^{-6}$ & $3.59\cdot 10^{-6}$  & 2.13  & 2.00 \\
\hline 
 \end{tabular}
\caption{Errors and convergence rates for the approximation of  the value function of problem \eqref{test2}  and $\sigma^2/2=0.005$, $\Delta t = \Delta x/4$.}
\label{table:testsolesattadxfratto4v}
\end{table}

\begin{table}[h!]
 \centering
\begin{tabular}{||c|c|c|c|c|c||} 
\hline
\multirow{2}{*}{$\Delta x$ }& \multicolumn{5}{c||}{Errors for the approximation of $m^*(T,\cdot)$}  \\
\cline{2-6}
& $E_{\infty}$ &  $E_2$ & $p_{\infty}$ &  $p_2$ & positivity error\\
\hline
$2.00\cdot 10^{-1}$ & $5.93\cdot 10^{-3}$ & $7.01\cdot 10^{-3}$  & -  &  - & $-8.18\cdot 10^{-5}$\\
\hline 
$1.00\cdot 10^{-1}$  & $2.63\cdot 10^{-3}$ & $2.17\cdot 10^{-3}$  & 1.17 & 1.69 & $-3.58 \cdot 10^{-10}$\\
\hline
$5.00\cdot 10^{-2}$  & $1.23\cdot 10^{-3}$ & $4.80\cdot 10^{-4}$  & 1.10 &  2.18 & $0$\\
\hline 
$2.50\cdot 10^{-2}$ & $3.39\cdot 10^{-4}$ & $9.61\cdot 10^{-5}$ & 1.86 & 2.32 & $0$\\
\hline
\end{tabular} 
\caption{Errors and convergence rates for the approximation of  the density of problem \eqref{test2} and $\sigma^2/2=0.005$, $\Delta t = \Delta x/4$.}
\label{table:testsolesattadxfratto4m}
\end{table}

\begin{table}[h!]
  \centering
\begin{tabular}{||c|c|c|c|c||} 
\hline
\multirow{2}{*}{$\Delta x$ }&  \multicolumn{4}{c||}{Errors for the approximation of $v^*(0,\cdot)$}\\
\cline{2-5}
& $E_{\infty}$ &  $E_2$ & $p_{\infty}$ &  $p_2$ \\
\hline
$2.00\cdot 10^{-1}$ &  $1.98\cdot 10^{-4}$ & $1.87\cdot 10^{-4}$  & -  &  - \\
\hline 
$1.00\cdot 10^{-1}$ &  $2.84\cdot 10^{-5}$ & $2.86\cdot 10^{-5}$  & 2.80 & 2.71\\
\hline
$5.00\cdot 10^{-2}$ & $3.41\cdot 10^{-6}$ & $3.94\cdot 10^{-5}$ & 3.06 & 2.86 \\
\hline 
$2.50\cdot 10^{-2}$ & $4.56\cdot 10^{-7}$ & $5.08\cdot 10^{-6}$ & 2.90 & 2.96 \\
\hline 
 \end{tabular}
\caption{Errors and convergence rates for the approximation of the value function of problem \eqref{test2}  and $\sigma^2/2=0.005$, $\Delta t = (\Delta x)^2$.}
\label{table:testsolesattadxquadrov}
\end{table}

\begin{table}[h!]
 \centering
\begin{tabular}{||c|c|c|c|c|c||} 
\hline
\multirow{2}{*}{$\Delta x$ }& \multicolumn{5}{c||}{Errors for the approximation of $m^*(T,\cdot)$} \\
\cline{2-6}
& $E_{\infty}$ & $E_2$ & $p_{\infty}$ & $p_2$ & positivity error\\
\hline
$2.00\cdot 10^{-1}$ & $6.60\cdot 10^{-3}$ & $7.63\cdot 10^{-3}$ & - & - & $-1.01\cdot 10^{-4}$\\
\hline 
$1.00\cdot 10^{-1}$ & $3.11\cdot 10^{-3}$ & $2.60\cdot 10^{-3}$ & 1.09 & 1.55 & $-1.19\cdot 10^{-8}$\\
\hline
$5.00\cdot 10^{-2}$ & $9.17\cdot 10^{-4}$ & $7.16\cdot 10^{-4}$ & 1.76 & 1.86 & $0$\\
\hline 
$2.50\cdot 10^{-2}$ & $2.42\cdot 10^{-4}$ & $1.81\cdot 10^{-4}$ & 1.92 & 1.98 & $0$\\
\hline
\end{tabular}
\caption{Errors and convergence rates for the approximation of the density of problem \eqref{test2}  and $\sigma^2/2=0.005$, $\Delta t = (\Delta x)^2$.}
\label{table:testsolesattadxquadrom}
\end{table}

\subsection{Mean field games with local couplings}\label{Popov}
In this section, we approximate the solution of the second-order MFG system with local couplings studied in \cite[Section 5.2]{PT15}. Namely, we consider system 
\be\label{test3}
\ba{rcl}
-\partial _t v - \frac{\sigma^2}{2}\Delta v + \half \left| \nabla v  \right|^2 &=&F(x,m)
\quad  \text{in } [0,T) \times \OO_\Delta, \\[10pt]
\partial_t m  - \frac{\sigma^2}{2} \Delta m  - \divergence\left(\nabla v m\right) &=&0 \quad  \text{in }(0,T] \times \OO_\Delta, \\[5pt]
v(T,\cdot) = 0, \; &\;& \;  m(0,\cdot) =m_0^*    \quad  \text{in } \RR^d,
\ea
\ee
with $T=0.05$, $\OO_\Delta =(0,1)$, homogeneous Neumann boundary conditions at $x=0$ and $x=1$, $\sigma^2/2=0.05$,
$$ m_0^*(x)=\begin{cases} 4 \sin^2(2\pi x-\frac{1}{4}) & \text{if }x\in[\frac{1}{4},\frac{3}{4}],\\
0&{\mbox{otherwise}},
\end{cases}\quad{\textrm{and}}\quad   F(x,m)=3m_0^{*}(x)-\min(4,m).$$

Notice that the coupling term $F$ depends on the density $m$ in a pointwise (or local) manner. The homogeneous Neumann boundary conditions are approximated as in \cite{CCDS21}. In this example, we do not have an explicit expression for $(v^*,m^*)$. 

We consider two cases, one with $\sigma^2/2 = 0.05$ and the other one with $\sigma^2/2 = 0.005$. 
 As input parameters for the fixed-point iterations, we set the initial guess for the density equal to the initial datum $m_0^*$ at each time step, as damping parameter $\omega=0.5$ and as tolerance
 $\tau=10^{-9}$.
In order to compute the errors and rates of convergence, we compare our approximations $(v_\Delta, m_\Delta)$ with a reference solution, which is still denoted by $(v^*,m^*)$, computed with $\Delta x= 6.67\cdot 10^{-4}$ and $\Delta t=(\Delta x)^{3/2}/3$.
In Tables \ref{tab:PopovHJ} and \ref{tab:Popovmass}, we show the errors and convergence rates for $v_{\Delta}(0,\cdot)$, $\partial_{x} v_{\Delta}(0,\cdot)$, and $m_{\Delta}(T,\cdot)$, which are computed by taking $\Delta t=(\Delta x)^{3/2}/3$ for different values of $\Delta x$.
We observe an order of convergence greater than two in most of the cases. In order to show the main advantage of the proposed scheme over a low-order scheme, we compare the proposed scheme with a first-order semi-Lagrangian scheme for MFG, developed in \cite{CS15}. Table \ref{tab:Popovmass}, columns 4 to 8, shows that, using the low-order scheme, the density is approximated with a much lower accuracy and the convergence rate is also much lower. Finally, Figure~\ref{fig:testesatta2} shows the approximated density $m_{\Delta}$ at time $t=T$ and the approximated value function $v_{\Delta}$, together with its gradient $D v_{\Delta}$,  at time $t=0$. These approximations are computed with $\Delta x = 1.56 \cdot 10^{-3}$.
\begin{table}[h!]
  \centering
\begin{tabular}{||c|c|c|c|c|c|c|c|c||} 
\hline
\multirow{2}{*}{$\Delta x$ }&  \multicolumn{4}{c|}{Errors for the approximation of $v^*(0,\cdot)$}& \multicolumn{4}{c||}{Errors for the approximation of $\partial_x v^*(0,\cdot)$}\\
\cline{2-9}
& $E_{\infty}$ &  $E_2$ & $p_{\infty}$ &  $p_2$ & $E_{\infty}$ &  $E_2$ & $p_{\infty}$ &  $p_2$  \\
\hline
$5.00 \cdot 10^{-2}$ & $5.38\cdot 10^{-2}$ & $3.80\cdot 10^{-2}$   & -  &  - &$8.09\cdot 10^{-2}$ & $4.96\cdot 10^{-2}$   & -  &  -  \\
\hline 
$2.50 \cdot 10^{-2}$  & $1.43\cdot 10^{-2}$ & $1.29\cdot 10^{-2}$  & 1.91 & 1.55&  $1.37\cdot 10^{-2}$ & $1.19\cdot 10^{-2}$  & 2.53& 2.05\\
\hline
$1.25 \cdot 10^{-2}$  &$4.25\cdot 10^{-3}$ & $3.24\cdot 10^{-3}$  & 1.75 &  1.99& $3.94\cdot 10^{-3}$ & $2.79\cdot 10^{-3}$  & 1.80  &  2.09 \\
\hline 
$6.25 \cdot 10^{-3}$ & $8.84\cdot 10^{-4}$ & $7.99\cdot 10^{-4}$  & 2.27 &  2.02 &  $8.34\cdot 10^{-4}$ & $7.07\cdot 10^{-4}$  & 2.23  & 1.98  \\
\hline
 \end{tabular}
\caption{\label{tab:PopovHJ} Errors and convergence rates for  the approximation of $v^*(0,\cdot)$ and $\partial_x v^*(0,\cdot)$ of problem \eqref{test3}, with $\Delta t = (\Delta x)^{3/2}/3$}
\end{table}
\begin{table}[h!]
  \centering
\begin{tabular}{||c|c|c|c|c|c|c|c|c||} 
\hline
\multirow{2}{*}{$\Delta x$ }&\multicolumn{4}{c|}{High-order scheme}  
&\multicolumn{4}{c||}{Low-order scheme}\\
\cline{2-9}
 & $E_{\infty}$ &  $E_2$ & $p_{\infty}$ &  $p_2$   & $E_{\infty}$ &  $E_2$ & $p_{\infty}$ &  $p_2$ \\
\hline
 $5.00 \cdot 10^{-2}$&$9.07\cdot 10^{-2}$ & $4.82\cdot 10^{-2}$   & -  &  - &$1.97\cdot 10^{-0}$ & $6.30\cdot 10^{-1}$   & -  &  -  \\
\hline 
$2.50 \cdot 10^{-2}$&  $1.81\cdot 10^{-2}$ & $6.79\cdot 10^{-3}$  & 2.32& 2.82  &$1.15\cdot 10^{-0}$ & $3.63\cdot 10^{-1}$   & 0.77 &  0.79 \\
\hline
$1.25 \cdot 10^{-2}$& $4.81\cdot 10^{-3}$ & $1.36\cdot 10^{-3}$  & 1.91  &  2.32  &$9.03\cdot 10^{-1}$ & $2.80\cdot 10^{-1}$   & 0.35 &  0.48 \\
\hline 
 $6.25 \cdot 10^{-3}$&  $7.64\cdot 10^{-4}$ & $2.06\cdot 10^{-4}$  & 2.65  & 2.72 &$1.96\cdot 10^{-1}$ & $4.96\cdot 10^{-2}$   & 0.48 &  0.51  \\
\hline 
 \end{tabular}
\caption{\label{tab:Popovmass}Errors and convergence rates for  the approximation of  $m^*(T,\cdot)$ of problem \eqref{test3}, with $\Delta t = (\Delta x)^{3/2}/3$}
\end{table}

\begin{figure}[h!]
 \includegraphics[width=0.3\textwidth]{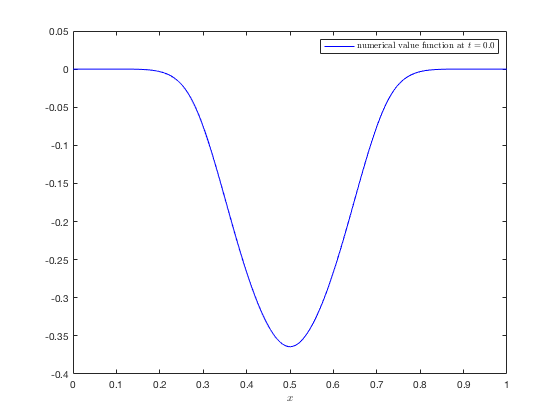} \includegraphics[width=0.3\textwidth]{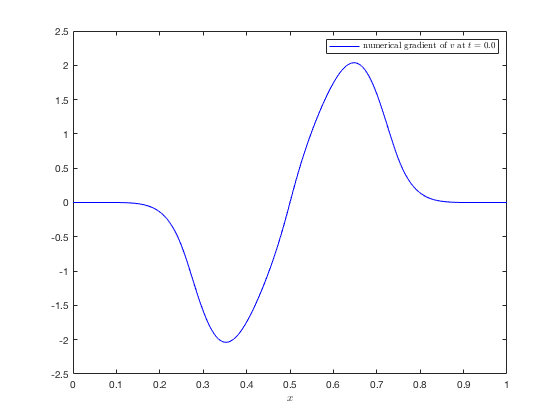} 
 \includegraphics[width=0.3\textwidth]{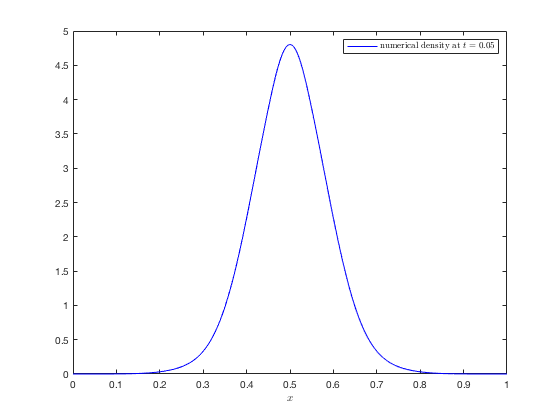}
\caption{Approximated value function $v_{\Delta}(0,\cdot)$ (left), the derivate $Dv_{\Delta}[\mu](0,\cdot)$ (center), and approximated density $m_{\Delta}(T,\cdot)$ of problem  \eqref{test3}(right).}
\label{fig:testesatta2}
\end{figure}

\section{Conclusions and future perspectives}
The main aim of the paper is to present a new and efficient high-order scheme to solve MFG systems with regular solutions. In order to do so,
we have developed a new high-order scheme for the \eqref{FP} equation, based on Lagrange-Galerkin methods combined with a second-order weak approximation of the underlying stochastic characteristic curves. A convergence analysis has been provided in the distributional sense and with respect to the weak topology in $L^2$. We have then combined the new scheme for the \eqref{FP} equation with a high-order semi-Lagrangian scheme for the \eqref{hjbmu} equation to obtain a high-order scheme for the \eqref{MFG} system.
We have shown the performance of the scheme  by numerical simulations.
We heuristically expect convergence rate $8/3$, which is reached in some cases.
The main advantage of the scheme, as usual for semi-Lagrangian schemes, is that during the fixed point iterations the equations \eqref{hjbmu} and \eqref{FP} are solved by schemes which are explicit and do not require the standard parabolic CFL condition ${\Delta t}=O((\Delta x)^2)$ in order to be stable. Recall that the CFL condition is required by standard explicit finite difference schemes to approximate parabolic PDEs. Instead, in  Theorem \ref{weakconvergence} and in Proposition \ref{propiedadesbasicasesquema}    the relation  ${(\Delta x})^{q+1}/{\Delta t}\to 0$ is assumed, which implies that larger time steps than $\Delta t=O((\Delta x)^2)$ are allowed. The restriction on the  time step is only due to accuracy. In fact, as Tables \ref{table:testsolesattadxfratto4m} and
\ref{table:testsolesattadxquadrom} show, the accuracy may decrease for time steps that are far from the optimal one. Similar considerations were observed in \cite{ferrettisecondoordine}, where high-order semi-Lagrangian schemes are applied to approximate linear parabolic PDEs.
In addition, the scheme for the \eqref{FP} is conservative, which is not generally true for semi-Lagrangian type schemes applied to conservation laws (see e.g. \cite{BF14}).
The main drawbacks of the scheme are the loss of positivity for the discrete density, and the lack of a constant high-order convergence rate.
Both drawbacks are due to the choice of  standard cubic basis functions. Investigation on the use of a different class of basis function is an interesting point to be addressed in the future.
Concerning the observed oscillations  in the rate of convergence, we attribute them mostly to the lack of regularity of the integrands appearing in the schemes which yields, possibly, a lack of accuracy in the approximation of the integral terms. More regular basis functions may help to improve the quadrature error and then the overall truncation errors.\\

{\bf Acknowledgements.} The first two authors would like to thank the Italian Ministry of Instruction, University and Research (MIUR) for supporting this research with funds coming from the PRIN Project $2017$ ($2017$KKJP$4$X entitled ``Innovative numerical methods for evolutionary partial differential equations and applications''). The three authors were partially supported by KAUST through the subaward agreement OSR-$2017$-CRG$6$-$3452$.$04$.

\normalsize
\section{Appendix: Proofs.}
{{\em Proof of Theorem \ref{teorema_well_posedness} }
We refer the reader to \cite[Theorem~6.6.1]{MR3443169} for the existence result in {\rm(i)} as well as for the nonnegativity property in {\rm(ii)}. The uniqueness result in {\rm(i)} and the mass conservation property in {\rm(iii)} follow from \cite[Theorem~9.3.6]{MR3443169} and \cite[Corollary~6.6.6]{MR3443169}, respectively. Finally, the proof of {\rm(iv)} is given in \cite[Theorem 4.3]{figalli08}.
 {\hfill{$\square$}\medskip}

{\em{Proof of Proposition \ref {thm:equicontcons}}}. In the proof  of both assertions, we fix $\phi \in  C_{0}^{2}(\RR^d)$  and we will  denote by $C$  a positive real number which  can depend on $\phi$ but not on $\Delta$. We will also use the estimate 
\be\label{eq:stimaCNoperatore}  \left| \sum_{\ell \in \I_{d}}\omega_{\ell}\phi(y^\ell_k(x))  -\left[\phi(x)+\Dt \left( \frac{\sigma^2}{2} \Delta \phi(x)+  \langle b(x,t_k),\nabla \phi(x)\rangle \right)\right]\right| \leq C  (\Delta t)^{2} \quad \text{for $x\in \RR^d$,}\ee
which follows from the definition of $y^\ell_k(x)$ and a Taylor expansion (see for instance  \cite{BCCF21}).  \smallskip

{\rm(i)} Let us first show the assertion for $t=t_{k+1}$ and $s=t_{k}$ for some $k\in \I^*_{\Dt}$. Set  $\eps  : = \phi-I[\phi]$ and fix $k\in \I_{\Delta t}^{*}$.  Remark~\ref{uniform_integrability_over_compact_sets} yields the existence of $C>0$ such that
\be\label{lemma_3_2_primo_paso}\ba{rcl}
\ds \left| \int_{\RR^d} \phi (x) \left(m_{\Delta} (t_{k+1},x) - m_{\Delta} (t_{k},x) \right) \dd x\right| & \ds \leq &\ds \left| \int_{\RR^d} I[\phi] (x) \left(m_{\Delta} (t_{k+1},x) - m_{\Delta} (t_{k},x) \right) \dd x\right| \\[12pt]
\; & \, & + C  \|\varepsilon\|_{L^\infty}. 
\ea
\ee
Recalling that $\mbox{supp}\{m_{\Delta}(t_k,\cdot)\}\subset \OO_{\Delta}$ and using the definition of the scheme in \eqref{LG}, we have that
\be\label{lemma_3_2_secondo_paso}\ba{ll} \ds
 \int_{\RR^d} I[\phi] (x) \left(m_{\Delta} (t_{k+1},x) - m_{\Delta} (t_{k},x) \right) \dd x&=  \ds \int_{\OO_{\Delta}} \sum_{i\in \ZZ^d} \phi(x_i) \beta_i (x) \left(\sum_{j\in \I_{\Delta x}} \left(m_{k+1,j}  -m_{k,j}\right)  \beta_j(x) \right) \dd x \\[6pt]
&= \ds \sum_{i\in \ZZ^d} \phi(x_i)  \left(\sum_{j\in \I_{\Delta x}} \left(m_{k+1,j}  -m_{k,j}\right) \int_{\OO_{\Delta}}\beta_i (x) \beta_j(x)\dd x  \right)\\[25pt]
& =\ds \sum_{i\in  \ZZ^d} \phi(x_i) \bigg[\sum_{\ell \in \I_d} \omega_\ell \sum_{j\in \I_{\Delta x}} m_{k,j} \bigg(\int_{\OO_{\Delta}} \beta_i (y^\ell_k(x)) \beta_j(x) \dd x \\[6pt]
& \hspace{5.2cm}\ds- \int_{\OO_{\Delta}} \beta_i (x) \beta_j(x) \dd x \bigg) \bigg]  
\\[12pt]
&= \ds  \sum_{\ell \in \I_d} \omega_\ell \sum_{j\in \I_{\Delta x}} m_{k,j}\int_{\OO_{\Delta}} \bigg[I[\phi] (y^\ell_k(x))  -  I[\phi] (x)\bigg]\beta_j(x) \dd x
\\[18pt]
&= \ds  \sum_{\ell \in \I_d} \omega_\ell \int_{\OO_{\Delta}} \bigg[I[\phi] (y^\ell_k(x))  -  I[\phi] (x)\bigg] m_{\Delta}(t_k,x)  \dd x.
\ea
\ee
On the other hand, since $\phi$ has a compact support, there exists $ C>0$ such that 
\be\label{non_lo_so}
\left\| \sum_{\ell \in \I_d} \omega_\ell \left(I[\phi] (y^\ell_k(\cdot)) -\phi(y^\ell_k(\cdot)) \right)\right\|_{L^2}     + 
\| \phi -  I[\phi]\|_{L^2 }   \leq  C \|\eps\|_{L^\infty}
\ee
and, by \eqref{eq:stimaCNoperatore} and {\bf(H2)},  there exists $C>0$ such that 
 \be\label{eq:stimaCNoperatore_semplice}
 \left\|  \sum_{\ell \in \I_d} \omega_\ell \left(\phi(y^\ell_k(\cdot)) -\phi \right)\right\|_{L^2 }  \leq  C \Delta t.
\ee
Thus, by the triangular and the Cauchy-Schwarz inequalities, Theorem~\ref{Prop:stability}{\rm(iii)}, \eqref{lemma_3_2_primo_paso}, \eqref{lemma_3_2_secondo_paso}, \eqref{non_lo_so},  and \eqref{eq:stimaCNoperatore_semplice}, we get  the existence of $C>0$ such that 
$$
\left| \int_{\RR^d} \phi (x) \left(m_{\Delta} (t_{k+1},x) - m_{\Delta} (t_{k},x) \right) \dd x\right| \leq C\left( \|\eps\|_{L^\infty }  +  \Delta t\right).
$$
It follows from \eqref{approximation_error_V_Deltax_q}, and the condition  $(\Delta x)^{q+1}\leq \Delta t$,  the existence of $C>0$ such that  \eqref{eq:prop_equicontinuity} holds for $t=t_{k+1}$ and $s=t_k$. Using this relation and the triangular inequality, we deduce that  \eqref{eq:prop_equicontinuity} holds for every $s=t_{k}$ and $t=t_{m}$ with $k$, $m\in \I_{\Dt}$.  

Now, let us fix $s,t\in [0,T]$ and assume, without loss of generality, that $t>s$. Let $k_1,\,  k_2\in \I_{\Delta t}^*$ be such that  $s \in [t_{k_1}, t_{k_1+1}]$ and $t \in [t_{k_2}, t_{k_2+1}]$. If $k_1=k_2$, then it follows  from \eqref{m_extension} that \eqref{eq:prop_equicontinuity} holds with $C_\phi=C$. Otherwise, $k_{2}\geq k_1+1$ and \eqref{m_extension} yield
\be
\label{proof:ineq_aux_1}
\ba{l}
\ds \left| \int_{\RR^d} \phi (x) \left(m_{\Delta} (t_{k_1+1},x) - m_{\Delta} (s,x) \right)  \dd x\right|  \leq \frac{t_{k_{1}+1}- s}{\Delta t} \left| \int_{\RR^d} \phi (x) \left(m_{\Delta} (t_{k_1+1},x) - m_{\Delta} (t_{k_1},x) \right)  \dd x \right|  \\[14pt]
\hspace{6.15cm} \leq  C(t_{k_{1}+1}- s).
\ea
\ee
Similarly, 
\be
\label{proof:ineq_aux_2}
\left| \int_{\RR^d} \phi (x) \left(m_{\Delta} (t_{k_2},x) - m_{\Delta} (t,x) \right)  \dd x \right|  \leq C( t- t_{k_2}).
\ee
Altogether, it follows from the triangular inequality,   \eqref{proof:ineq_aux_1}, \eqref{proof:ineq_aux_2}, and \eqref{eq:prop_equicontinuity}, with $t=t_{k_2}$ and $s= t_{k_{1}+1}$, that \eqref{eq:prop_equicontinuity} holds with $C_{\phi}=C$. \vspace{0.2cm}

{\rm(ii)}  By \eqref{approximation_error_V_Deltax_q},  Remark~\ref{uniform_integrability_over_compact_sets}, and the definition of the scheme \eqref{LG}, for each $k \in \I^*_{\Delta t}$  we have 
\be\label{serieecsdemfinal}
\ba{rcl}
\ds \int_{\RR^d}\phi(x) m_{\Delta}(t_{k+1},x) \dd x &=& \ds \int_{\RR^d}I[\phi](x) m_{\Delta}(t_{k+1},x) \dd x+ O(({\Delta x})^{q+1})\\[12pt]
	\; &=&  \ds \sum_{i\in \ZZ^d} \phi(x_i) \sum_{j\in \I_{\Delta x_n}}   m_{ k+1,j}\int_{\RR^d} \beta_i(x)\beta_j(x) \dd x  + O(({\Delta x})^{q+1})\\[12pt]
	&=& \ds \sum_{i\in \ZZ^d} \phi(x_i) \sum_{j\in \I_{\Delta x}}   m_{k,j}\sum_{\ell \in \I_{d}}\omega_{\ell}\int_{\RR^d} \beta_i(y^\ell_k(x))\beta_j(x) \dd x  +O(({\Delta x})^{q+1})	\\[15pt]
	&=& \ds   \sum_{j\in \I_{\Delta x_n}}   m_{k,j}\sum_{\ell \in \I_{d}}\omega_{\ell}\int_{\RR^d} I[\phi](y^\ell_k(x))\beta_j(x) \dd x  +O(({\Delta x})^{q+1})	\\[12pt]
	&=& \ds    \int_{\RR^d}\left(\sum_{\ell \in \I_{d}}\omega_{\ell} \phi(y^\ell_k(x))\right)m_{\Delta}(t_{k},x) \dd x +O(({\Delta x})^{q+1}).
\ea\ee
Using \eqref{eq:stimaCNoperatore} and Remark~\ref{uniform_integrability_over_compact_sets}, we obtain
\be
\label{eq_consistency_with_errors}
\ba{rcl}
		\ds \int_{\RR^d}\phi(x) \left(m_{\Delta}(t_{k+1},x) \right.-\left. m_{\Delta}(t_{k},x)\right)\dd x 
		&=& \ds \Dt\int_{\RR^d}\left( \frac{\sigma^2}{2} \Delta \phi(x)+ \langle b(t_k,x), \nabla \phi(x)\rangle\right)m_{\Delta}(t_{k},x) \dd x  \\[12pt]
		\;&\; &\ds  +O(({\Delta x})^{q+1}+({\Delta t})^{2}).\ea
\ee
Notice that, for any $s\in [t_{k}, t_{k+1}]$, Remark~\ref{uniform_integrability_over_compact_sets} implies that
\be
\label{eq_consistency_error_integral_of_b}
\int_{\RR^d}  \langle b(t_k,x), \nabla \phi(x)\rangle m_{\Delta}(t_{k},x) \dd x= \int_{\RR^d}  \langle b(s,x), \nabla \phi(x)\rangle m_{\Delta}(t_{k},x) \dd x + O(\omega_\phi(\Delta t)). 
\ee
By \eqref{m_extension} and the fact that $b(s,\cdot)\in C^{q+1}(\RR^d)$,  together with assertion {\rm(i)},   we have
\be\label{eq:stima1}
\left|\int_{t_k}^{t_{k+1}}\int_{\RR^d}\left(\frac{\sigma^2}{2} \Delta \phi(x)+ \langle b(s,x), \nabla \phi(x)\rangle\right)(m_{\Delta}(s,x)-m_{\Delta}(t_k,x))\dd x\dd s\right|= O((\Delta t)^2).
\ee
Thus,  \eqref{prop:consistency_eq} follows from \eqref{eq_consistency_error_integral_of_b}, \eqref{eq:stima1}, and \eqref{eq_consistency_with_errors}.
 {\hfill{$\square$}\medskip}

{\em{Proof of Lemma \ref{compatezza_in_D_primo}.}} In view of the Arzel\`a-Ascoli theorem \cite[Chapter 7, Theorem 18]{MR0070144} (see also \cite[Section 4]{Krukowski18}) and Proposition~\ref{thm:equicontcons}{\rm(i)}, it suffices to show that the family $\M$ is pointwise relatively compact. Let us consider the absolutely convex set $U_0:=\{\phi \in C^{\infty}_0(\RR^d)\; |\; \|\phi\|_{L^{\infty} }<1, \; \mbox{supp} \,\phi \subseteq \ov B(0,1)\}$. This set is a neighborhood of $0$ in the standard topology of $C_{0}^{\infty}(\RR^d)$ (see e.g. \cite[Chapter 10]{MR4182424}) and, for any $t\in [0,T]$,  
$$
\sup_{\phi \in U_0}\left| \int_{\RR^d} m_{\Delta}(t,x) \phi(x)\dd x\right| = \sup_{\phi \in U_0}\left| \int_{\ov B(0,1)} m_{\Delta}(t,x) \phi(x)\dd x\right|\leq \|  m_{\Delta}(t,\cdot)\|_{L^1({\ov B(0,1)})}\leq r,
$$
where  $r:= \sup  \{\|  m_{\Delta}(t,\cdot)\|_{L^1({\ov B(0,1)})} \, | \, \Delta \in (0,\infty)^2\}$ belongs to $[0,+\infty)$ by \eqref{integrale_1_stima_l2}. This proves that $\{m_{\Delta }(t, \cdot) \, | \, \Delta \in (0,\infty)^2\} \subset \left\{ T \in \D'(\RR^d) \,  | \, \sup_{\phi \in U_0} \left|T(\phi)\right | \leq r \right\}$ which, by the Banach-Alaoglu-Bourbaki theorem (see e.g. \cite[Theorem 23.5]{MR1483073}), is a compact subset of $\D'(\RR^d)$. 
 {\hfill{$\square$}\medskip}

\em{Proof of Proposition \ref{propiedadesbasicasesquema}.}} Let $\Delta t>0$, $\Delta x>0$, and $\alpha\in A$. In the computations below, the big $O$ terms are uniform with respect to $\alpha\in A$.    Let us apply  \eqref{eq:stimaCNoperatore} to $\phi( t_{k+1},\cdot)$, with $b(t,x)=-\alpha$, to obtain 
\be\label{eq:eqlemma1}\ba{rcl}
\underset{\ell \in \I_{d}}\sum \omega_\ell \phi \left( t_{k+1},x_i -\Delta t \alpha+\sqrt{\Delta t}\sigma e^\ell \right) &=& \phi \left(t_{k+1}, x_i \right) +  \Dt \left(  \frac{\sigma^2 }{2} \Delta  \phi(t_{k+1},x_i)- \langle \nabla \phi(t_{k+1},x_i),\alpha\rangle \right)\\
& &+  O \left( (\Dt)^2  \right).
\ea
\ee
By ({\bf{H2}}) and using the first-order Taylor expansion of $F(\cdot , \mu(t_{k}))$ around $x_i$,  we get
\be\label{eq:eqlemma2}
\ba{rcl}
\frac{1}{2}\left(\underset{\ell \in \I_{d}}\sum \omega_\ell F(x_i -\Delta t \alpha+\sqrt{\Delta t}\sigma e^\ell, \mu(t_{k+1})) +F(x_i,\mu(t_{k}))\right)&=&F( x_i,\mu (t_{k+1})) \\[8pt]
\; & \, & + O (\Dt+{\bf d}(\mu(t_{k+1}),\mu(t_{k}))).
\ea
\ee
Thus, by \eqref{SLscheme}, \eqref{eq:eqlemma1}, \eqref{eq:eqlemma2}, and \eqref{approximation_error_V_Deltax_q},  we obtain 
$$
\ba{rcl}
S[\mu](\phi_{k+1},k,i) &=& \ds \phi \left(t_{k+1}, x_i \right) -\Dt \sup_{\alpha \in A} 
\left[\langle \nabla \phi(t_{k+1},x_i),\alpha\rangle - \frac{|\alpha |^{2}}{2} \right] 
+ \Dt \frac{\sigma^2 }{2}\Delta \phi(t_{k+1},x_i) \\[12pt]
\; & \; & \ds  + \Dt F(x_i, \mu(t_{k+1})) + O \left((\Dt)^2+ (\Dx)^{q+1}+\Delta t{\bf d}(\mu(t_{k+1}),\mu(t_{k}))\right) \\[6pt]
\; & = & \ds \phi \left( t_{k+1},x_i \right)- \frac{\Dt}{2} | \nabla \phi(t_{k+1},x_i)|^{2}+ \Dt \frac{\sigma^2 }{2} \Delta \phi(t_{k+1},x_i)\\[12pt]
& \; & + \Dt F(x_i, \mu(t_{k+1}))+ O \left( (\Dt)^2 + (\Dx)^{q+1}+ \Delta t{\bf d}(\mu(t_{k+1}),\mu(t_{k}))\right).
\ea
$$
Finally, we get 
\[
\begin{split}
 \frac{1}{{\Delta t}} \left[\phi(t_{k},x_i)-S_{\Delta}[\mu](\phi_{k+1},k,i)\right] =& -\partial _t\phi(t_{k+1},x_i) - \frac{\sigma^2 }{2} \Delta\phi(t_{k+1},x_i)+\frac{1}{2}| \nabla \phi(t_{k+1},x_i)|^{2} -  F(x_i, \mu(t_{k+1}))  \\
& + O \left(\Dt + \frac{(\Dx)^{q+1} }{\Dt} +{\bf d}(\mu(t_{k+1}),\mu(t_{k}))\right),
\end{split}
\]
from which the result follows.
 {\hfill{$\square$}\medskip}
\end{document}